\documentclass[11pt,a4paper]{article}

\usepackage{epsf,epsfig,amsfonts,amsgen,amsmath,amstext,amsbsy,amsopn,amsthm}
\usepackage{latexsym}
\usepackage{amssymb}

\usepackage{epic,eepic,color}
\usepackage{multirow}
\setlength{\textwidth}{150mm} \setlength{\oddsidemargin}{7mm}
\setlength{\evensidemargin}{7mm} \setlength{\topmargin}{-5mm}
\setlength{\textheight}{245mm} \topmargin -18mm

\newtheorem{theorem}{Theorem}[section]

\newtheorem{lemma}{Lemma}[section]
\newtheorem{false statement}{False statement}

\theoremstyle{definition}

\newtheorem{claim}{Claim}

\newtheorem{remark}[claim]{Remark}

\newtheorem{problem}{Problem}
\newtheorem{case}{Case}

\baselineskip 15pt

\begin{document}

\title{\bf Extremal problems on the Hamiltonicity of claw-free graphs}

\date{}

\author{Binlong Li\thanks{Department of Applied Mathematics,
Northwestern Polytechnical University, Xi'an, Shaanxi,~710072,~P.~R.~China.
E-mail:~{\tt libinlong@mail.nwpu.edu.cn}}~~~~
Bo Ning\thanks{Corresponding author. Center for Applied Mathematics,
Tianjin University, Tianjin, 300072,~P.~R.~China. E-mail: {\tt bo.ning@tju.edu.cn}}~~~~~
Xing Peng\thanks{Center for Applied Mathematics,
Tianjin University, Tianjin,  300072,~P.~R.~China. E-mail: {\tt x2peng@tju.edu.cn}}}

\maketitle

\begin{abstract}
In 1962, Erd\H{o}s proved that if a graph $G$ with $n$ vertices satisfies
$$
e(G)>\max\left\{\binom{n-k}{2}+k^2,\binom{\lceil(n+1)/2\rceil}{2}+\left\lfloor
\frac{n-1}{2}\right\rfloor^2\right\},
$$
where the minimum degree $\delta(G)\geq k$ and $1\leq k\leq(n-1)/2$, then
it is Hamiltonian. For $n \geq 2k+1$, let $E^k_n=K_{k}\vee (kK_1+K_{n-2k})$,
where ``$\vee$" is the ``join" operation.
One can observe  $e(E^k_n)=\binom{n-k}{2}+k^2$ and $E^k_n$
is not Hamiltonian. As $E^k_n$ contains induced claws for
$k\geq 2$, a natural question is to characterize all 2-connected
claw-free non-Hamiltonian graphs with the largest possible number
of edges. We answer this question completely by proving a claw-free
analog of Erd\H{o}s' theorem. Moreover, as byproducts, we establish
several tight spectral conditions for a 2-connected claw-free graph
to be Hamiltonian. Similar results for the traceability of connected
claw-free graphs are also obtained. Our tools include Ryj\'{a}\v{c}ek's
claw-free closure theory and Brousek's characterization of minimal
2-connected claw-free non-Hamiltonian graphs.
\end{abstract}

\noindent {\bf Keywords:}  Hamilton cycles;  claw-free graph;
clique number;  claw-free closure;  eigenvalues

\smallskip
\noindent {\bf Mathematics Subject Classification (2010)}: 05C50;
05C45; 05C35

\section{Introduction}
Given a graph $G$, a {\it Hamilton cycle} of $G$ is a cycle which
visits all vertices of $G$. We will say that $G$ is {\it Hamiltonian}
if it contains a Hamilton cycle. Determining the Hamiltonicity of a graph
is a classically difficult problem in graph theory. An old
result due to Ore \cite{O61} states that every graph with $n$ vertices
and more than $\binom{n-1}{2}+1$ edges is Hamiltonian. Generalizing
Ore's theorem by introducing the minimum degree of a graph as a
new parameter, Erd\H{o}s \cite{E} proved the following theorem:

\begin{theorem}[Erd\H{o}s \cite{E}]
For a graph $G$  with $n$ vertices and $\delta(G)\geq k$
where $1\leq k\leq(n-1)/2$, if
$$e(G)>\max\left\{\binom{n-k}{2}+k^2,\binom{\lceil(n+1)/2\rceil}{2}+\left\lfloor
\frac{n-1}{2}\right\rfloor^2\right\}$$
then $G$ is Hamiltonian.
\end{theorem}

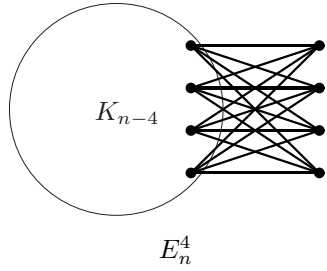
\begin{figure}[h]
\begin{center}
\setlength{\unitlength}{0.8pt} \small
\begin{picture}(535,140)
\thicklines
\put(175,0){\put(70,80){\thinlines\circle{100}}
\put(60,75){$K_{n-4}$}
\put(135,80){\multiput(-30,-30)(0,20){4}{\circle*{4}}
\multiput(30,-30)(0,20){4}{\circle*{4}}
\put(-30,-30){\line(1,0){60}} \put(-30,-30){\line(3,1){60}}
\put(-30,-30){\line(3,2){60}} \put(-30,-30){\line(1,1){60}}
\put(-30,-10){\line(3,-1){60}} \put(-30,-10){\line(1,0){60}}
\put(-30,-10){\line(3,1){60}} \put(-30,-10){\line(3,2){60}}
\put(-30,10){\line(3,1){60}} \put(-30,10){\line(1,0){60}}
\put(-30,10){\line(3,-1){60}} \put(-30,10){\line(3,-2){60}}
\put(-30,30){\line(1,0){60}} \put(-30,30){\line(3,-1){60}}
\put(-30,30){\line(3,-2){60}} \put(-30,30){\line(1,-1){60}}}
\put(90,10){$E_n^4$}}
\end{picture}
\caption{The graph $E_n^4$}
\label{graphEn4}
\end{center}
\end{figure}

Let $G_1$ and $G_2$ be two disjoint graphs. The \emph{join} of $G_1$
and $G_2$, denoted by $G_1\vee G_2$, is defined as: $V(G_1\vee G_2)=V(G_1)\cup V(G_2)$
and $E(G_1\vee G_2)=E(G_1)\cup E(G_2)\cup \{xy: x\in V(G_1),y\in V(G_2)\}$.
Erd\H{o}s' theorem is tight as shown by the following graph:
let $E^k_n:=K_{k}\vee (kK_1+K_{n-2k})$, where $n\geq 2k+1$ (see Figure \ref{graphEn4}
for an example). When $k<n/6$, we have
\[
e(E^k_n)=\binom{n-k}{2}+k^2 \geq  \binom{\lceil(n+1)/2\rceil}{2}+\left\lfloor \frac{n-1}{2}\right\rfloor^2.
\]
However, $E^k_n$ is not Hamiltonian. We say a graph $G$ is
{\it claw-free} if it does not contain $K_{1,3}$ as an
induced subgraph. We remark that $E^k_n$ is not claw-free for $k \geq 2$
and the condition $\delta(G)\geq 2$ is necessary for a graph to be Hamiltonian.

Claw-free graphs play an important role when we consider
the Hamiltonicity of graphs. A long-standing conjecture
by Matthews and Sumner \cite{MS}  asserts that every
4-connected claw-free graph is Hamiltonian. They also
constructed 3-connected claw-free graphs which are not
Hamiltonian. Therefore, it is natural to consider pairs
of forbidden subgraphs which force a 2-connected graph to be
Hamiltonian.  Bedrossian \cite{B91} solved this problem
completely by proving that if $R$ and
$S$ are connected graphs of order at least 3 with
$R,S\neq P_3$ and $G$ is  2-connected, then $G$ being
$R$-free and $S$-free implies $G$ is Hamiltonian if and
only if (up to symmetry) $R=K_{1,3}$ and
$S=P_4,P_5,P_6,C_3,Z_1,Z_2,B,N$, or $W$ (see \cite{B91}).
Recently, Bedrossian's result has received a lot of
attention. Li et al. \cite{LRWZ} and Ning and Zhang \cite{NZ}
obtained heavy subgraph versions of Bedrossian's result by
restricting Ore-type degree sum condition \cite{O60} and Fan-type 2-distance
condition \cite{F} to induced subgraphs, respectively.
Very recently, Li and Vr\'ana \cite{LV} characterized all
disconnected forbidden pairs for a 2-connected graph to be
Hamiltonian. In the other direction, Brousek \cite{B98}
characterized some important properties of  minimal 2-connected
claw-free non-Hamiltonian graphs, from which Bedrossian's result
can be obtained as a corollary.

The main goal of this paper is to give claw-free analogs of Erd\H{o}s'
theorem. We first consider the following problem:

\begin{problem}
Can we characterize all 2-connected claw-free non-Hamiltonian
graphs on $n$ vertices that have the largest number of edges?
\end{problem}

We obtain the following solution to Problem 1, and
point out that Brousek's result is a key ingredient in our proof.

\begin{theorem}\label{ThStruClawFree}
Let $G$ be a 2-connected claw-free graph on $n\geq 24$ vertices.
If $e(G)\geq e(EB_n)=e(EB'_n)$, then $G$ is Hamiltonian
unless $G=EB_n$ or $G=EB'_n$ (see Figure \ref{graphsEBnEB'n}).
\end{theorem}

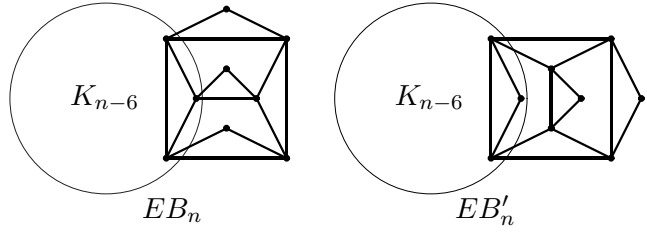
\begin{figure}[h]
\begin{center}
\setlength{\unitlength}{0.45pt}
\begin{picture}(565,205)
\thicklines

\put(0,0){\put(100,110){\thinlines\circle{160}}
\put(150,60){\circle*{4}} \put(250,60){\circle*{4}}
\put(150,160){\circle*{4}} \put(250,160){\circle*{4}}
\put(175,110){\circle*{4}} \put(200,85){\circle*{4}}
\put(225,110){\circle*{4}} \put(200,135){\circle*{4}}
\put(200,185){\circle*{4}} \put(150,60){\line(1,0){100}}
\put(150,60){\line(0,1){100}} \put(150,160){\line(1,0){100}}
\put(250,60){\line(0,1){100}} \put(150,60){\line(1,2){25}}
\put(150,160){\line(1,-2){25}} \put(150,60){\line(2,1){50}}
\put(250,60){\line(-2,1){50}} \put(250,60){\line(-1,2){25}}
\put(250,160){\line(-1,-2){25}} \put(175,110){\line(1,0){50}}
\put(175,110){\line(1,1){25}} \put(225,110){\line(-1,1){25}}
\put(150,160){\line(2,1){50}} \put(250,160){\line(-2,1){50}}
\put(70,105){$K_{n-6}$} \put(130,10){$EB_n$}}

\put(270,0){\put(100,110){\thinlines\circle{160}}
\put(150,60){\circle*{4}} \put(250,60){\circle*{4}}
\put(150,160){\circle*{4}} \put(250,160){\circle*{4}}
\put(175,110){\circle*{4}} \put(200,85){\circle*{4}}
\put(200,135){\circle*{4}} \put(225,110){\circle*{4}}
\put(275,110){\circle*{4}} \put(150,60){\line(1,0){100}}
\put(150,60){\line(0,1){100}} \put(150,160){\line(1,0){100}}
\put(250,60){\line(0,1){100}} \put(150,60){\line(1,2){25}}
\put(150,160){\line(1,-2){25}} \put(150,60){\line(2,1){50}}
\put(250,60){\line(-2,1){50}} \put(150,160){\line(2,-1){50}}
\put(250,160){\line(-2,-1){50}} \put(200,85){\line(0,1){50}}
\put(200,85){\line(1,1){25}} \put(200,135){\line(1,-1){25}}
\put(250,60){\line(1,2){25}} \put(250,160){\line(1,-2){25}}
\put(70,105){$K_{n-6}$} \put(120,10){$EB'_n$}}

\end{picture}

\caption{Graphs $EB_n$ and $EB'_n$}
\label{graphsEBnEB'n}
\end{center}
\end{figure}

Moreover, we prove a general Erd\H{o}s-type result for the
Hamiltonicity of 2-connected claw-free graphs involving
minimum degree and number of edges. We define
the graph $F_{k+1,k+1,n-2k-2}$ as: $V(G)=\bigcup_{i=1}^3V(G_i)$, where
$G_1=G_2=K_{k+1}$, $G_3=K_{n-2k-2}$, and $G_1,G_2,G_3$ are
pairwise vertex-disjoint;
$E(G)=\bigcup_{i=1}^{3}E(G_i)\bigcup (\bigcup_{1\leq i<j\leq 3}E_G(G_i,G_j))$,
where $E_G(G_i,G_j)=\{u_iu_j,v_iv_j:1\leq i<j\leq3,
u_i,v_i\in V(G_i)$ and $u_i\neq v_i$, $1\leq i\leq 3\}$.
Obviously, $\delta(F_{k+1,k+1,n-2k-2})=k$.

\begin{theorem}\label{ThgeneralErdos}\footnote{This is a solution to a problem originally appeared
in the first version of this paper, which was suggested by one referee.}
Let $k\geq 3$ and $n\geq k^2+8k+4$. Suppose that $G$ is a 2-connected
claw-free graph of order $n$ and minimum degree $\delta(G)\geq k$. If
$$e(G)\geq e(F_{k+1,k+1,n-2k-2})={n-2k-2\choose 2}+2{k+1\choose 2}+6,$$
then $G$ is Hamiltonian unless $G=F_{k+1,k+1,n-2k-2}$ (see Figure \ref{graphEpnk}).
\end{theorem}

\begin{remark}
By computation, we have ${n-2k-2\choose 2}+2{k+1\choose 2}+6<\binom{n-k}{2}+k^2$
when $n>\frac{3k}{2}+\frac{5}{2}+\frac{4}{k+2}$. Since $n\geq k^2+8k+4$,
we can see the inequality always holds. Combining
Theorem \ref{ThStruClawFree} and Theorem \ref{ThgeneralErdos},
we improve the edge condition of Erd\H{o}s' theorem for the Hamiltonicity
of 2-connected claw-free graphs. Moreover, we observe $EB'_n=F_{3,3,n-6}$.
\end{remark}

\begin{figure}[h]
\begin{center}
\begin{picture}(140,120)

\newcommand{\tuoyuan}[2]{\qbezier(#1,0)(#1,#2)(0,#2)
\qbezier(0,#2)(-#1,#2)(-#1,0) \qbezier(-#1,0)(-#1,-#2)(0,-#2)
\qbezier(0,-#2)(#1,-#2)(#1,0)}

\thicklines
\put(0,-20){\multiput(70,40)(0,40){3}{\put(0,0){\thinlines\tuoyuan{45}{10}}
\put(-40,0){\circle*{4}} \put(40,0){\circle*{4}}}
\put(60,36){$K_{k+1}$} \put(60,76){$K_{k+1}$}
\put(55,116){$K_{n-2k-2}$} \put(30,40){\line(0,1){80}}
\put(110,40){\line(0,1){80}} \qbezier(30,40)(10,80)(30,120)
\qbezier(110,40)(130,80)(110,120) }

\end{picture}
\caption{The graph $F_{k+1,k+1,n-2k-2}$} \label{graphEpnk}
\end{center}
\end{figure}

For a graph $G$, a {\it Hamilton path} of $G$ is a
path which contains all vertices of $G$. We say that a graph
is {\it traceable} if it contains a Hamilton path. We use $EN_n$ ($n\geq 6$)
to denote the graph obtained from $K_{n-3}$ by adding three disjoint pendent edges (see Figure \ref{graphENn}).
Similarly, we have the following sufficient condition
for the traceability of connected claw-free graphs.

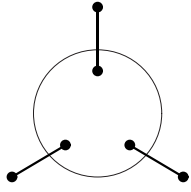
\begin{figure}[h]
\begin{center}
\setlength{\unitlength}{0.80pt}
\begin{picture}(120,100)
\put(60,40){\put(0,0){\circle{60}} \thicklines
\put(0,20){\circle*{4}} \put(0,50){\circle*{4}}
\put(-40,-30){\circle*{4}} \put(40,-30){\circle*{4}}
\put(-15,-15){\circle*{4}} \put(15,-15){\circle*{4}}
\put(0,20){\line(0,1){30}} \put(-40,-30){\line(5,3){25}}
\put(40,-30){\line(-5,3){25}}}
\end{picture}
\caption{The graph $EN_n$} \label{graphENn}
\end{center}
\end{figure}

\begin{theorem}\label{ThStruClaw-Trac}
Let $G$ be a connected claw-free graph on $n\geq 12$ vertices.
If $e(G)\geq e(EN_n)$, then $G$ is traceable
unless $G=EN_n$ (see Figure \ref{graphENn}).
\end{theorem}

In this paper, we will also prove several spectral analogs of our above
theorems. It is well known that the eigenvalues of matrices
associated with a graph can be used to describe its structure.
Thus one may ask whether we can find any spectral condition
for a graph to be Hamiltonian. Pioneer work in this direction
include Van den Heuvel's proof \cite{H95} of the famous fact
that Petersen graph is not Hamiltonian, Krivelevich
and Sudakov's result \cite{KS} for the Hamiltonicity of
$d$-regular graphs, as well as a result by Butler and
Chung \cite{BC}. In the process of finding spectral analogs of Erd\H{o}s'
theorem \cite{E}, Ning and Ge \cite{NG} first established
sufficient spectral conditions for the existence of Hamilton cycles in
graphs with $\delta(G)\geq 2$. Their research motivated
plenty of work in the similar spirit, for example, those by Feng
et al. \cite{FZL,Fetal}. Finally, Li and Ning \cite{LN16,LN17} obtained
spectral analogs of Erd\H{o}s' theorem \cite{E} and Moon-Moser's
theorem \cite{MM} on Hamilton cycles and paths of general graphs
and of bipartite graphs, respectively.

For a graph $G$,  let $A$ be the adjacency matrix of $G$ and $D$
the diagonal matrix of degrees.  The \emph{signless Laplacian
matrix} $Q$ of $G$ is defined as $D+A$. The largest eigenvalue of
$A$ (resp. $Q$) is denoted by $\mu(G)$ (resp. $q(G)$). We will
use $\overline G$ to denote the {\it complement} of $G$.

We prove the following sufficient spectral conditions for the
Hamiltonicity of 2-connected claw-free graphs.

\begin{theorem}\label{ThmuG}
Let $G$ be a 2-connected claw-free graph on $n$ vertices.
If $n\geq 30$ and $\mu(G)\geq\mu(EB_n)$, then $G$ is Hamiltonian
unless $G=EB_n$ (see Figure \ref{graphsEBnEB'n}).
\end{theorem}

\begin{theorem}\label{ThmuCG}
Let $G$ be a 2-connected claw-free graph on $n$ vertices.
If $n\geq 219$ and $\mu(\overline{G})\leq\mu(\overline{EB'_n})$,
then $G$ is Hamiltonian unless $G=EB'_n$ (see Figure \ref{graphsEBnEB'n}).
\end{theorem}

Similarly, we prove the following theorem involving $q(G)$.
\begin{theorem}\label{ThqG}
Let $G$ be a 2-connected claw-free graph on $n$ vertices.
If $n\geq 51$ and $q(G)\geq q(EB_n)$, then $G$ is Hamiltonian unless
$G=EB_n$ (see Figure \ref{graphsEBnEB'n}).
\end{theorem}

\begin{remark}
We would like to point out the following interesting observation.
Theorem \ref{ThStruClawFree} involves extremal graphs $EB_n$
and $EB'_n$. These two graphs have the same number of edges.
To prove Theorems \ref{ThmuG} and \ref{ThqG}, we have
to compare the spectral radii and signless spectral radii of
these two graphs, but it turns out that their values are
different. This means that problems in extremal graph theory
and their spectral analogues are not completely the same!
\end{remark}

We also prove the following spectral analog of Theorem \ref{ThStruClaw-Trac}.
\begin{theorem}\label{ThTraceable}
Let $G$ be a connected claw-free graph on $n\geq 18$ vertices. If
$q(G)\geq q(EN_n)$, then $G$ is traceable unless $G=EN_n$ (see Figure \ref{graphENn}).
\end{theorem}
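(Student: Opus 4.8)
The plan is to prove Theorem \ref{ThTraceable} by following the same "spectral from structural" strategy used for Theorem \ref{ThNiLi}, but now working with the signless Laplacian matrix $Q=D+A$ rather than the adjacency matrix $A$. The extremal graph is the same, $EN_n$, so the structure of the argument should parallel the proof of Theorem \ref{ThNiLi}: first invoke the structural characterization (via Ryjáček's closure theory and Brousek's description of minimal non-traceable claw-free graphs) to pin down which connected claw-free graphs fail to be traceable, and then show that among these exactly $EN_n$ maximizes the $Q$-index, with $q(EN_n)$ being a strict threshold for everything else.

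First I would set up the structural dichotomy. If $G$ is a connected claw-free non-traceable graph on $n$ vertices, then its closure is also non-traceable, and by the Brousek-type characterization every such graph contains (as a spanning subgraph, or after reduction) one of a small list of extremal configurations; the densest of these is built around a large clique with a few pendent structures, of which $EN_n$ (a $K_{n-3}$ with three pendent edges) is the representative with the most edges. Concretely, the goal of this first phase is to show that a non-traceable connected claw-free $G \neq EN_n$ must have substantially fewer edges than $EN_n$, or else have its edges spread in a way that forces a smaller $Q$-index. I expect this to reduce to bounding $e(G)$ and the clique number $\omega(G)$ for the relevant family.

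Next I would translate the edge/clique bounds into a $Q$-index bound. The key spectral tool is the standard inequality relating the $Q$-index to degrees and edges, for instance
\[
q(G) \le \max_{v} \Bigl( d(v) + \frac{1}{d(v)}\sum_{u \sim v} d(u) \Bigr),
\]
together with bounds of the form $q(G) \le \frac{2e(G)}{n-1} + n - 2$ and the monotonicity of $q$ under edge addition. Using these, I would show that for every non-traceable connected claw-free $G$ other than $EN_n$ one has $q(G) < q(EN_n)$. This requires a reasonably sharp lower estimate of $q(EN_n)$ — obtained by exhibiting a good test vector (a vector constant on the clique, with separate values on the three pendent vertices and their attachment points) in the Rayleigh quotient $\frac{x^{\top}Qx}{x^{\top}x}$ — and matching upper estimates of $q$ for the competing graphs. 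The hypothesis $n \ge 18$ will be exactly what is needed to make these two estimates separate cleanly.

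The hard part will be the spectral separation step, not the structural reduction. Because $EN_n$ is \emph{nearly} a clique on $n-3$ vertices, its $Q$-index is very close to that of the complete or near-complete competitors, so crude bounds will not distinguish them; I will need a careful two-sided estimate of $q(EN_n)$ (likely pinning it between $n-4$ and $n-3$ via the characteristic polynomial of a small quotient matrix from the equitable partition of $EN_n$) and an equally careful upper bound on the $Q$-index of each rival family, showing the gap is positive precisely once $n \ge 18$. Handling the finitely many small or atypical non-traceable claw-free graphs that escape the generic edge-counting bound — the cases the threshold $n \ge 18$ is designed to rule out — will require separate, direct verification and is where most of the routine but delicate computation lives.
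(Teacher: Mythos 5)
Your overall skeleton (Ryj\'{a}\v{c}ek's closure, the structural characterization of non-traceable claw-free graphs, the bound $q(G)\le \frac{2e(G)}{n-1}+n-2$, and monotonicity of $q$ under edge addition) matches the paper's, but two things in your plan would derail the execution. First, your quantitative anchor is wrong: you propose to pin $q(EN_n)$ ``between $n-4$ and $n-3$,'' which is the range of the \emph{adjacency} spectral radius $\mu(EN_n)$, not of the $Q$-index. Since $K_{n-3}\subseteq EN_n$ and $q(K_{n-3})=2n-8$, the correct (and entirely sufficient) estimate is $q(EN_n)\ge 2n-8$; no upper estimate, no quotient-matrix characteristic polynomial, and no Rayleigh-quotient test vector are needed. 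Feeding $q(G)\ge 2n-8$ into Theorem \ref{ThFeYu} gives $e(G)\ge\frac{(n-1)(n-6)}{2}$, which exceeds $\frac{n(n-9)}{2}+21$ exactly when $n\ge 18$ --- the threshold is pure arithmetic, not a finite list of exceptional small graphs to be checked separately.

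Second, you have located the difficulty in the wrong place. There is no ``spectral separation step'' in the paper at all: $q(EN_n)$ is never compared with the $Q$-index of any rival family, and attempting such a rival-by-rival comparison would be both harder and unnecessary. The entire burden falls on the structural claim you only gesture at (``I expect this to reduce to bounding $e(G)$ and $\omega(G)$''): one must prove that a connected claw-free non-traceable graph with $e(G)\ge \frac{n(n-9)}{2}+21$ is a subgraph of $EN_n$ (Lemma \ref{LeeGENn}). The paper does this via an Ore-type lemma for closed claw-free graphs (Lemma \ref{Ledudv}), which yields that a closed non-traceable graph with that many edges has $\omega(G)\ge n-3$ (Lemma \ref{LeeG}), and then the induced-$N$ theorem of Duffus, Jacobson and Gould pins the closure down to $EN_n$ (Lemma \ref{LeENn}); the final step $G\subset EN_n\Rightarrow q(G)<q(EN_n)$ is the strict monotonicity you already mention. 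Without supplying these lemmas your argument has no bridge from the edge count to the conclusion, so as written the proposal is incomplete.
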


We follow standard notation throughout this paper. For
those not defined here, we refer the reader to monographs \cite{BM,CDS}.
Let $G$ be a graph and $v$ be a vertex of $V(G)$. We denote
by $N_G(v)$ the set of vertices which are adjacent to $v$ in $G$.
If $H$ is a subgraph of $G$, then let $N_H(v)=N_G(v)\cap V(H)$.
Set $d_H(v)=|N_H(v)|$. When the graph $G$ is clear from the context,
we write $N(v)$ and $d(v)$ for $N_G(v)$ and $d_G(v)$, respectively.
The \emph{minimum degree} of $G$, denoted by $\delta(G)$, equals to
$\min\{d(v):v\in V(G)\}$. Let $S\subset V(G)$. We denote by $G[S]$
the subgraph of $G$ induced by $S$ and by $G-S$ the subgraph
of $G$ induced by $V(G)\backslash V(S)$. We use $e(G)$
to denote the number of edges in $G$, and $\omega(G)$
to denote the clique number of $G$. For two graphs
$G_1$ and $G_2$, let $G_1+G_2$ and $G_1\vee G_2$
denote the \emph{disjoint union} and the \emph{join}
of $G_1$ and $G_2$, respectively. A graph is called
\emph{nonseparable} if it is connected and has no cut-vertex. Following
the terminology in \cite{F74}, we say that a graph is
a \emph{block-chain} if it is nonseparable or it has
connectivity 1 and has exactly two end-blocks.
A graph $G$ is called \emph{Hamiltonian-connected} if for
each pair of vertices $x,y\in V(G)$, there is a Hamilton path from
$x$ to $y$ in $G$.

The rest of this  paper is organized as follows. In Section
2, we will recall a few theorems related to claw-free closure
theory and prove several structural lemmas. In Section 3,
we will present proofs of Theorems \ref{ThStruClawFree},
\ref{ThgeneralErdos}, and  \ref{ThStruClaw-Trac}. We
will establish several spectral inequalities  in Section 4. We will prove
Theorems \ref{ThmuG}, \ref{ThmuCG}, \ref{ThqG},
and \ref{ThTraceable} in Section 5.  We will mention a concluding
remark and a problem for future work in the last section.
\section{Preliminaries}
In this section, we will list several theorems from
structural graph theory and prove a number of useful lemmas.
We first recall the claw-free closure theory introduced by
Ryj\'{a}\v{c}ek \cite{R}. For completeness, we include necessary
definitions here. For more information, please see \cite{R}.

Let $G$ be a claw-free graph. For a vertex $x\in V(G)$, if the
neighborhood of $x$ induces a connected but not complete subgraph
of $G$, then $x$ is called an \emph{eligible} vertex in $G$. Set
$B_G(x)=\{uv: u,v\in N(x), uv\notin E(G)\}$. Let $G'_x$ be a new
graph such that  $V(G'_x)=V(G)$ and $E(G'_x)=E(G)\cup B_G(x)$. We
call $G'_x$ the \emph{local completion of $G$ at $x$}. The \emph{closure}
of $G$, denoted by $cl(G)$, is defined by a sequence of graphs
$G_1,G_2,\ldots,G_t$  and vertices $x_1,x_2,\ldots,x_{t-1}$ such that:
\begin{description}
\item[(a)]  $G_1=G$ and $G_t=cl(G)$;
\item[(b)]  $x_i$ is an eligible vertex of $G_i$, and $G_{i+1}=(G_i)'_{x_i}$,
$1\leq i\leq t-1$;
\item[(c)]  $cl(G)$ has no eligible vertices.
\end{description}

The following theorems are very useful when we study
Hamiltonian properties of claw-free graphs.

\begin{theorem}[Ryj\'{a}\v{c}ek \cite{R}]\label{ThR}
Let $G$ be a claw-free graph. Then $G$ is Hamiltonian if and only if
$cl(G)$ is Hamiltonian.
\end{theorem}

\begin{theorem}[Brandt, Favaro, and Ryj\'{a}\v{c}ek \cite{BFR}] \label{ThBFR}
Let $G$ be a claw-free graph. Then $G$ is traceable if and only if
$cl(G)$ is traceable.
\end{theorem}

\begin{theorem} [Duffus, Jacboson, and Gould \cite{DJG}]\label{ThDuJaGo}
Let $G$ be a claw-free graph with no induced copies of $EN_6$ (see Figure \ref{graphENn}).\\
(1) If $G$ is connected, then $G$ is traceable.\\
(2) If $G$ is 2-connected, then $G$ is Hamiltonian.
\end{theorem}

We also need a theorem of Brousek \cite{B98}.
First, let us recall some notation by Brousek \cite{B98}.
Let $\mathcal{P}$ denote the class of graphs obtained from two
triangles $a_1a_2a_3a_1$ and $b_1b_2b_3b_1$  such that each
pair $\{a_i,b_i\}$ is connected by a triangle or a path of
$k_i\geq 3$ vertices (see Figure \ref{clawfreenonHam} for some examples). We use
$P_{x_1,x_2,x_3}$ to denote the graph from $\mathcal{P}$,
where $x_i=T$ if $\{a_i,b_i\}$ is connected by a triangle,
and $x_i=k_i$ if $\{a_i,b_i\}$ is connected by a path with
$k_i$ vertices. For each graph from the collection
$\{P_{T,T,T}, P_{3,T,T}, P_{3,3,T},P_{3,3,3}\}$
(see Figure \ref{clawfreenonHam}), consider replacing one of its triangles by $K_{n-6}$,
and let $\mathcal{EB}_n$ ($n\geq 9$) be the collection of all graphs obtained
in this way. For example, if we replace one triangle in $P_{T,T,T}$
by $K_{n-6}$, then we get one of two new graphs which are denoted by
$EB_n$ and $EB'_n$, respectively (see Figure \ref{graphsEBnEB'n}). It is important
to notice that each graph in $\mathcal{EB}_n$ is a subgraph
of $EB_n$ or $EB'_n$.
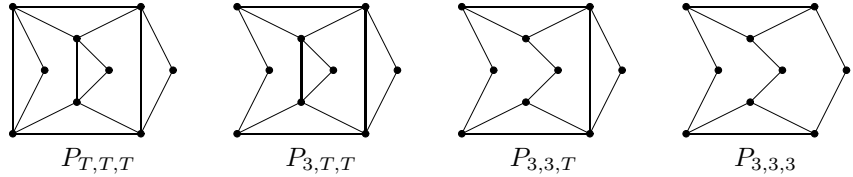
\begin{figure}[h]
\begin{center}
\setlength{\unitlength}{0.6pt}\small
\begin{picture}(560,120)\label{claw9}
\put(0,0){\multiput(20,30)(80,0){2}{\put(0,0){\circle*{4}}
\put(0,80){\circle*{4}} \put(20,40){\circle*{4}}
\put(0,0){\line(0,1){80}} \put(0,0){\line(1,2){20}}
\put(0,80){\line(1,-2){20}}} \put(20,30){\line(1,0){80}}
\put(20,30){\line(2,1){40}} \put(100,30){\line(-2,1){40}}
\put(20,110){\line(1,0){80}} \put(20,110){\line(2,-1){40}}
\put(100,110){\line(-2,-1){40}} \put(60,50){\circle*{4}}
\put(60,90){\circle*{4}} \put(80,70){\circle*{4}}
\put(60,50){\line(0,1){40}} \put(60,50){\line(1,1){20}}
\put(60,90){\line(1,-1){20}} \put(50,10){$P_{T,T,T}$}}

\put(140,0){\multiput(20,30)(80,0){2}{\put(0,0){\circle*{4}}
\put(0,80){\circle*{4}} \put(20,40){\circle*{4}}
\put(0,0){\line(1,2){20}} \put(0,80){\line(1,-2){20}}}
\put(20,30){\line(1,0){80}} \put(100,30){\line(0,1){80}}
\put(20,30){\line(2,1){40}} \put(100,30){\line(-2,1){40}}
\put(20,110){\line(1,0){80}} \put(20,110){\line(2,-1){40}}
\put(100,110){\line(-2,-1){40}} \put(60,50){\circle*{4}}
\put(60,90){\circle*{4}} \put(80,70){\circle*{4}}
\put(60,50){\line(0,1){40}} \put(60,50){\line(1,1){20}}
\put(60,90){\line(1,-1){20}} \put(50,10){$P_{3,T,T}$}}

\put(280,0){\multiput(20,30)(80,0){2}{\put(0,0){\circle*{4}}
\put(0,80){\circle*{4}} \put(20,40){\circle*{4}}
\put(0,0){\line(1,2){20}} \put(0,80){\line(1,-2){20}}}
\put(20,30){\line(1,0){80}} \put(100,30){\line(0,1){80}}
\put(20,30){\line(2,1){40}} \put(100,30){\line(-2,1){40}}
\put(20,110){\line(1,0){80}} \put(20,110){\line(2,-1){40}}
\put(100,110){\line(-2,-1){40}} \put(60,50){\circle*{4}}
\put(60,90){\circle*{4}} \put(80,70){\circle*{4}}
\put(60,50){\line(1,1){20}} \put(60,90){\line(1,-1){20}}
\put(50,10){$P_{3,3,T}$}}

\put(420,0){\multiput(20,30)(80,0){2}{\put(0,0){\circle*{4}}
\put(0,80){\circle*{4}} \put(20,40){\circle*{4}}
\put(0,0){\line(1,2){20}} \put(0,80){\line(1,-2){20}}}
\put(20,30){\line(1,0){80}} \put(20,30){\line(2,1){40}}
\put(100,30){\line(-2,1){40}} \put(20,110){\line(1,0){80}}
\put(20,110){\line(2,-1){40}} \put(100,110){\line(-2,-1){40}}
\put(60,50){\circle*{4}} \put(60,90){\circle*{4}}
\put(80,70){\circle*{4}} \put(60,50){\line(1,1){20}}
\put(60,90){\line(1,-1){20}} \put(50,10){$P_{3,3,3}$}}
\end{picture}

\caption{Four 2-connected claw-free non-Hamiltonian graphs of
order 9}
\label{clawfreenonHam}
\end{center}
\end{figure}

A claw-free graph is said to be \emph{closed} if $cl(G)=G$.
It is not difficult to see that, for every vertex $x$ of a closed claw-free
graph $G$, the neighborhood $N(x)$ is either a clique or the disjoint
union of two cliques in $G$ (see \cite{R}).

\begin{theorem}[Brousek \cite{B98}]\label{ThB98}
Every 2-connected claw-free non-Hamiltonian graph contains an
induced subgraph isomorphic to a graph in $\mathcal{P}$.
\end{theorem}

Let us prove the following lemma on the Hamiltonicity of
closed claw-free graphs.

\setcounter{lemma}{4}
\begin{lemma}\label{Ledudv}
Let $G$ be a closed claw-free graph on $n$ vertices. If
there are two nonadjacent vertices $u,v\in V(G)$ such that
$d(u)+d(v)\geq n$, then $G$ is Hamiltonian.
\end{lemma}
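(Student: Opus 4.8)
The plan is to pass to the line-graph world via Ryj\'{a}\v{c}ek's structure theorem and then translate Hamiltonicity into the existence of a dominating closed trail. Since $G$ is a closed claw-free graph, Ryj\'{a}\v{c}ek \cite{Ryjacek} guarantees that $G$ is the line graph of some triangle-free graph $H$; write $G=L(H)$ and $m=|E(H)|=n$. Under this identification a vertex of $G$ is an edge of $H$, two vertices of $G$ are adjacent iff the corresponding edges of $H$ share an endpoint, and the degree of a vertex $e=xy$ of $G$ equals $d_H(x)+d_H(y)-2$. I would then invoke the classical theorem of Harary and Nash-Williams: for $m\geq 3$, $L(H)$ is Hamiltonian if and only if $H$ has a \emph{dominating closed trail}, that is, a closed trail $T$ such that every edge of $H$ has an endpoint in $V(T)$. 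Thus it suffices to produce such a trail in $H$.

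First I would record what the nonadjacent pair contributes. The vertices $u,v$ correspond to two disjoint edges $e_1=a_1a_2$ and $e_2=b_1b_2$ of $H$; set $S=\{a_1,a_2,b_1,b_2\}$, a $4$-element set. Writing $i(S)$ for the number of edges of $H$ inside $S$ and $\partial(S)$ for the number of edges with exactly one endpoint in $S$, a direct incidence count gives $\sum_{x\in S}d_H(x)=2\,i(S)+\partial(S)$, so the number of edges of $H$ meeting $S$ is $i(S)+\partial(S)=\big(\sum_{x\in S}d_H(x)\big)-i(S)$. On the other hand, $\sum_{x\in S}d_H(x)=\big(d_H(a_1)+d_H(a_2)-2\big)+\big(d_H(b_1)+d_H(b_2)-2\big)+4=d(u)+d(v)+4\geq n+4=m+4$.

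Since $H$ is triangle-free, Mantel's bound gives $i(S)\leq 4$, while the number of edges meeting $S$ is trivially at most $m$. Combining these with the previous estimate forces equality throughout: $d(u)+d(v)=n$, $i(S)=4$, and every edge of $H$ meets $S$. Now $H[S]$ carries exactly $4$ edges, is triangle-free, and already contains the disjoint edges $a_1a_2$ and $b_1b_2$; the only triangle-free way to add two further edges inside $S$ is to take a pair of disjoint cross-edges, so $H[S]$ is a $4$-cycle $C$ having $e_1$ and $e_2$ as its two opposite edges. This cycle $C$ is a closed trail with $V(C)=S$, and since every edge of $H$ meets $S=V(C)$, it is a dominating closed trail. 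By the Harary--Nash-Williams theorem, $G=L(H)$ is Hamiltonian.

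The calculation is routine; the step that carries the weight is the clean structural translation. The point to get right is that the hypothesis $d(u)+d(v)\geq n$ is in fact extremal---in any line graph of a triangle-free graph a nonadjacent pair satisfies $d(u)+d(v)\leq n$, since $\sum_{x\in S}d_H(x)\leq m+i(S)\leq m+4$---so the inequality can only hold with equality, and it is exactly this rigidity that pins $H[S]$ down to a $4$-cycle and hands us the dominating closed trail for free. I would also note the minor bookkeeping needed to apply Harary--Nash-Williams (that $m=n\geq 3$, immediate since a graph with two nonadjacent vertices has at least four), and that the $2$-connectedness hypothesis, although natural for the ambient application, is not actually consumed by this argument.
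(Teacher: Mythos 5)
Your proof is correct, but it takes a genuinely different route from the paper's. The paper argues entirely inside $G$: from $d(u)+d(v)\geq n$ and $uv\notin E(G)$ it extracts at least two common neighbours of $u$ and $v$, uses the absence of eligible vertices to show any two common neighbours are nonadjacent, uses claw-freeness to cap the number of common neighbours at two, and then uses the fact that neighbourhoods in a closed claw-free graph are cliques or disjoint unions of two cliques to decompose $G$ into four cliques arranged cyclically around $u,x,v,y$, from which a Hamilton cycle is read off directly. You instead pass to Ryj\'{a}\v{c}ek's triangle-free preimage $H$ with $G=L(H)$ and reduce Hamiltonicity to a dominating closed trail via Harary--Nash-Williams; the incidence count on the $4$-set $S$ correctly forces equality ($i(S)=4$, every edge of $H$ meets $S$), and the triangle-free rigidity pins $H[S]$ down to a $4$-cycle, which is the required dominating trail. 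The two arguments are really translations of one another --- your $4$-cycle in $H$ is exactly the cyclic arrangement of the paper's four cliques (the stars at the vertices of $S$), and the two cross-edges of the $4$-cycle are the two common neighbours $x,y$ --- but yours leans on two black boxes (the line-graph representation of closed claw-free graphs and Harary--Nash-Williams) where the paper's is self-contained modulo the clique/two-cliques neighbourhood fact, and in exchange yours yields the sharper structural dividend that $d(u)+d(v)\leq n$ holds automatically for every nonadjacent pair in such a graph, so the hypothesis can only be met with equality. Your observations that $m=n\geq 4\geq 3$ and that $2$-connectedness is not actually consumed are both accurate (the paper's proof does not use $2$-connectedness either). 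Both proofs are valid.
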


\begin{proof}
Since $uv\notin E(G)$ and $d(u)+d(v)\geq n$, $u$ and $v$ have at
least two common neighbors. Since $G$ is claw-free, if there exist two common
neighbors of $u$ and $v$ which are adjacent, then they would be eligible.  Thus $u$
and $v$ must be adjacent in $G$, which is a contradiction. Therefore, each two common neighbors
of $u$ and $v$ are nonadjacent. If $u$ and $v$ have three common neighbors, then
there will be a claw with the center $u$. This implies that $u$ and $v$
have exactly two common neighbors, say $x$ and $y$. Since $|N(u)\cap N(v)|=2$,
we get $|N(u)\cup N(v)|\geq n-2$ and
$N(u)\cup N(v)\cup \{u,v\}\subseteq V(G)$. It follows $|N(u)\cup N(v)|\leq n-2$,
and so $N(u)\cup N(v)\cup \{u,v\}=V(G)$. This means that every vertex in
$V(G)\backslash\{u,v,x,y\}$ is adjacent to $u$ or $v$. Note that each of $N(u)$
and $N(v)$ is a disjoint union of two cliques. This implies that $G$ consists
of four cliques $C_1,C_2,C_3$ and $C_4$ containing $\{u,x\},\{x,v\},\{v,y\}$,
and $\{y,u\}$, respectively. It is easy to check that $G$ has a Hamilton cycle.
The proof is complete.
\end{proof}

The second lemma concerns the clique number of a closed claw-free non-Hamiltonian
graph with a give number of edges.
\begin{lemma}\label{LeG}
Let $k$ be a positive integer and $G$ be a closed claw-free
non-Hamiltonian graph on $n\geq 2k+2$ vertices. If $$e(G)\geq
\binom{n-k-1}{2}+\binom{k+2}{2}+1$$ then $\omega(G)\geq n-k$.
\end{lemma}
\begin{proof}
A vertex $v$ is a \emph{heavy vertex} of $G$ if $d(v)\geq n/2$. By
Lemma \ref{Ledudv}, every two heavy vertices are adjacent in $G$. Let
$T$ be a maximum clique of $G$ such that all heavy vertices of $G$ are contained
in $T$ and let $H=G-V(T)$. Set $t=|V(T)|$.

Pick an arbitrary vertex  $v \in H$.  We know $v$ is nonadjacent to at
least one vertex in $T$. In fact, if $v$ has at least two neighbors in $T$,
say $x$ and $y$, then for each vertex $z\in V(T)\backslash \{x,y\}$,
there is a path from $v$ to $z$ in $G[N_T(x)\cup \{v\}]$. Since $G$ is closed,
$v$ is adjacent to $z$ in $G$. Now $G[V(T)\cup \{v\}]$
is a clique of $G$ which contains all heavy vertices but with more
vertices than $T$, a contradiction. This implies that every
vertex in $H$ has at most one neighbor in $T$.

We first assume $1\leq t\leq (n+1)/2$. Recall that every vertex
in $H$ has degree at most $(n-1)/2$ in $G$.  Therefore
 $$\sum_{v\in
V(H)}(d_T(v)+d(v))\leq(n-t)\left(\frac{n-1}{2}+1\right)=\frac{(n-t)(n+1)}{2}.$$
Moreover, by using calculus, we can obtain
\begin{align*}
e(G)    & =e(G[T])+\frac{\sum_{v\in V(H)}(d_T(v)+d(v))}{2}\\
        & \leq\binom{t}{2}+\frac{(n-t)(n+1)}{4}\\
        & =\frac{1}{2}t^2-\frac{n+3}{4}t+\frac{n(n+1)}{4}\\
        & \leq\frac{n^2-1}{4}\\
        & \leq\binom{n-k-1}{2}+\binom{k+2}{2}\\
        & <e(G),
\end{align*}
a contradiction.

Now we assume  $n/2+1\leq t\leq n-k-1$. Recall that every vertex
in $H$ has at most one neighbor in $T$. Thus, by using calculus,
we can obtain
\begin{align*}
e(G)    & =e(G[T])+e(H)+e(T,V(H))\\
        & \leq\binom{t}{2}+\binom{n-t}{2}+(n-t)\\
        & =\binom{t}{2}+\binom{n-t+1}{2}\\
        & \leq\binom{n-k-1}{2}+\binom{k+2}{2}\\
        &<e(G),
\end{align*}
a contradiction.

So we have $t\geq n-k$. This implies that $\omega(G)\geq t\geq n-k$. The proof is complete.
\end{proof}

\begin{remark}
Let $G_1=K_1\vee (K_{n-k-2}+K_{k+1})$, where $n\geq 2k+2$. One can find that $G$ is a closed claw-free
graph which is non-Hamiltonian. Notice that $e(G_1)=\binom{n-k-1}{2}+\binom{k+2}{2}$ and $\omega(G)=n-k-1$.
This example shows that the edge condition in Lemma \ref{LeG} is tight.
\end{remark}

The next two lemmas give us  characterizations of closed claw-free non-Hamiltonian
graphs under some assumptions on the clique number and the number of edges,
respectively.
\begin{lemma}\label{LeEBn}
Let $G$ be a 2-connected closed claw-free non-Hamiltonian graph on
$n$ vertices. If $\omega(G)\geq n-6$, then $G\in\mathcal{EB}_n$.
\end{lemma}

\begin{proof}
Since $G$ is a 2-connected graph which is closed, claw-free, and not Hamiltonian,
Theorem \ref{ThB98} implies that $G$ contains an induced subgraph $H$
isomorphic to a graph in $\mathcal{P}$. Let $C$ be a maximum clique
of $G$. If $|V(C)\cap V(H)|\leq 2$, then $|V(C)\cup V(H)|\geq n-6+9-2=n+1$, a
contradiction. Hence $|V(C)\cap V(H)|\geq 3$. Since $\omega(H)=3$, we get
$|V(C)\cap V(H)|\leq 3$. Thus $|V(C)\cap V(H)|=3$. If $|V(H)|\geq 10$,
then $|V(C)\cup V(H)|\geq n-6+10-3=n+1$, a contradiction. If $\omega(G)\geq n-5$,
then $|V(C)\cup V(H)|\geq n-5+9-3=n+1$, a contradiction. Therefore,
$|V(H)|=9$, $\omega(G)=n-6$, and $G$  contains a graph in $\mathcal{EB}_n$ as a subgraph.
Furthermore, if $G\notin \mathcal{EB}_n$ then $G$ is Hamiltonian, a contradiction.
This implies that $G\in \mathcal{EB}_n$ and this completes the proof.
\end{proof}

\begin{lemma}\label{LeeGEBn}
Let $G$ be a 2-connected claw-free graph on $n\geq 14$ vertices. If
$e(G)\geq n(n-15)/2+57$, then $G$ is Hamiltonian unless $G\subseteq
EB_n$ or $G \subseteq EB'_n$.
\end{lemma}

\begin{proof}
Let $G'=cl(G)$. If $G'$ is Hamiltonian, then $G$ is also Hamiltonian
by Theorem \ref{ThR}. Now we assume  $G'$ is non-Hamiltonian. Clearly
$e(G')\geq e(G)\geq n(n-15)/2+57$. By Lemma \ref{LeG}, we get
$\omega(G)\geq n-6$.  Lemma \ref{LeEBn} gives $G'\in\mathcal{EB}_n$.
This implies that $G$ is a subgraph of a graph in $\mathcal{EB}_n$.
Note that every graph in $\mathcal{EB}_n$ is either a subgraph of
$EB_n$ or a subgraph of $EB'_n$. Thus either $G\subseteq EB_n$ or
$G \subseteq EB'_n$.
\end{proof}

The following two lemmas are in the same spirit as Lemmas \ref{LeEBn}
and \ref{LeeGEBn}, respectively.
\begin{lemma}\label{LeENn}
Let $G$ be a connected closed claw-free non-traceable graph on $n$
vertices. If $\omega(G)\geq n-3$ then $G=EN_n$.
\end{lemma}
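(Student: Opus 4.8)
The plan is to let $C$ be a maximum clique of $G$, so that $|C|=\omega(G)\geq n-3$ and at most three vertices lie outside $C$; write $T=V(G)\setminus C$. The whole argument rests on two facts forced by $cl(G)=G$. First, \emph{every vertex of $T$ has at most one neighbor in $C$}: if $x\in T$ had two neighbors $a,b\in C$ but were not adjacent to all of $C$, I pick $c\in C\setminus\{a\}$ with $xc\notin E(G)$; then in $G[N(a)]$ the clique $C\setminus\{a\}$ is joined to $x$ through $b$, so $N(a)$ is connected but not complete and $a$ is eligible, a contradiction; and if $x$ were adjacent to all of $C$ then $C\cup\{x\}$ would be a larger clique. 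Second, since $N(a)=(C\setminus\{a\})\cup(\text{the $T$-neighbors of }a)$ must be a clique or a disjoint union of two cliques, the set of vertices of $T$ attached to a given $a\in C$ is itself a clique, and no vertex of $C$ can have its neighborhood split into three or more cliques. I will use these repeatedly to discard configurations, together with the fact that, $C$ being a clique, a Hamilton path may run through its vertices in any order, so producing one reduces to handling the few vertices of $T$.

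First I would dispose of $\omega(G)\in\{n,n-1,n-2\}$, showing each forces traceability and hence contradicts the hypothesis. If $\omega(G)=n$ then $G=K_n$; if $\omega(G)=n-1$ the unique outside vertex is, by the first fact and connectedness, a pendant at some $a\in C$, and $K_{n-1}$ with a pendant is traceable. If $\omega(G)=n-2$, then $T=\{u,v\}$, and examining the two possibilities for $G[\{u,v\}]$ (edgeless or a single edge), with the clique constraint that two non-adjacent outside vertices cannot share an attachment point, shows a Hamilton path always exists, e.g. $u-a-(\text{rest of }C)-b-v$ when $u,v$ attach to distinct $a,b$. Hence a non-traceable $G$ must satisfy $\omega(G)=n-3$.

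The heart of the proof is the case $\omega(G)=n-3$, where $T=\{u,v,w\}$. I would organize it by the isomorphism type of $G[T]$: three isolated vertices, one edge plus an isolated vertex, a path $u-v-w$, or a triangle. In each type I enumerate the admissible attachment patterns, discarding those that violate closedness (typically because some neighborhood breaks into three cliques, or the middle vertex of $u-v-w$ or a triangle vertex becomes eligible) and those that force a larger clique, and for each surviving pattern I exhibit a Hamilton path by inserting the vertices of $T$ as a detour into the clique traversal. The expected outcome is that every admissible pattern is traceable \emph{except} the one where $G[T]$ is edgeless: there connectedness forces each of $u,v,w$ to have exactly one neighbor in $C$, and the clique constraint forces these three attachment points to be distinct, which is precisely $EN_n$. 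Since $G$ is non-traceable, this is the only surviving possibility, giving $G=EN_n$.

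The main obstacle is the bookkeeping in the $\omega(G)=n-3$ case: there are several attachment patterns per type of $G[T]$, and for each I must verify both that closedness and eligibility do not already forbid it and that a Hamilton path exists. What makes this tractable is the recurring mechanism that closedness prevents an attachment vertex $a\in C$ from carrying two $T$-neighbors unless they are adjacent, and prevents a vertex of $T$ from acquiring an extra $C$-neighbor that would split its neighborhood into three cliques; these constraints eliminate exactly the degenerate patterns that would otherwise be non-traceable, isolating $EN_n$ as the unique non-traceable graph.
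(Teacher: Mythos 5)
Your proposal is correct in outline, but it takes a genuinely different route from the paper. The paper's proof is three lines: since $G$ is connected, claw-free and non-traceable, Theorem \ref{ThDuJaGo} supplies an induced copy $H$ of $N$; a counting argument ($|C\cup V(H)|\le n$ forces $|C\cap V(H)|=3$) shows the maximum clique $C$ contains the triangle of $H$, hence $EN_n\subseteq G$; and since adding any edge to $EN_n$ yields a traceable graph, $G=EN_n$. You instead avoid the Duffus--Jacobson--Gould theorem entirely and argue directly from the structure of closed claw-free graphs: every vertex outside $C$ has at most one neighbour in $C$, the outside neighbours of a fixed clique vertex form a clique, and then an exhaustive case analysis over $|T|\le 3$ and the isomorphism type of $G[T]$ shows every admissible configuration except the edgeless-$T$ one is traceable. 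I checked the cases and they do close up (in particular, the potentially dangerous pattern where only the middle vertex of a path $u\mbox{-}v\mbox{-}w$ in $T$ attaches to $C$ is killed by claw-freeness, and coincident attachment points for non-adjacent outside vertices are killed by the two-cliques structure of neighbourhoods). Your approach buys self-containedness -- it needs nothing beyond Ryj\'a\v{c}ek's description of neighbourhoods in closed graphs -- at the price of substantial bookkeeping; the paper's buys brevity by outsourcing the hard combinatorics to Theorem \ref{ThDuJaGo}. One small point to tighten: in your proof that a vertex $x\in T$ with two neighbours $a,b\in C$ forces a contradiction, you assert $G[N(a)]$ is connected after exhibiting only that $x$ joins to the clique $C\setminus\{a\}$; other vertices of $T$ in $N(a)$ could a priori break connectivity, so $a$ need not be eligible. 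The clean fix is the one you already use elsewhere: $N(a)$ is a clique or a disjoint union of two cliques, $b$ and $c$ lie in one clique, $b$ and $x$ lie in one clique, hence $x$ and $c$ are adjacent, a contradiction.
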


\begin{proof}
Since $G$ is connected claw-free and non-traceable, Theorem
\ref{ThDuJaGo} implies that $G$ contains an induced subgraph $H$
isomorphic to $EN_6$. Let $C$ be a maximum clique of $G$. If
$|V(C)\cap V(H)|\leq 2$, then $|V(C)\cup V(H)|\geq n-3+6-2=n+1$,
a contradiction. Hence $|V(C)\cap V(H)|\geq 3$. Since $\omega(H)=3$,
$|V(C)\cap V(H)|\leq 3$, and thus $|V(C)\cap V(H)|=3$ and $C$
contains the (unique) triangle of $H$.  It follows $EN_n\subseteq G$.
Note that the graph obtained from $EN_n$ by adding at least one
more edge is traceable. Thus $G=EN_n$.
\end{proof}

\begin{lemma}\label{LeeGENn}
Let $G$ be a connected claw-free graph on $n\geq 8$ vertices. If
$e(G)\geq n(n-9)/2+21$, then $G$ is traceable unless $G\subseteq
EN_n$.
\end{lemma}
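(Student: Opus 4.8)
The plan is to pass to Ryj\'{a}\v{c}ek's closure and then chain the two structural lemmas already proved, rather than to argue directly about $G$ (which need not be closed). Suppose $G$ is non-traceable; I want to conclude $G\subseteq EN_n$. Set $G^{*}=cl(G)$. Since $G$ is claw-free, $G^{*}$ is again claw-free, and by Theorem \ref{ThBrFaRy} it is non-traceable as well; moreover closure only adds edges, so $G^{*}$ is connected, closed, and satisfies $e(G^{*})\geq e(G)$. The key observation is that non-traceability is stronger than non-Hamiltonicity: a Hamilton cycle contains a Hamilton path, so a non-traceable graph is in particular non-Hamiltonian. Hence $G^{*}$ is a closed claw-free non-Hamiltonian graph, which is exactly the setting of Lemma \ref{LeeG}.

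Next I would check that the edge hypothesis matches the $k=3$ instance of Lemma \ref{LeeG}. A direct computation gives
\[
\binom{n-4}{2}+\binom{5}{2}+1=\frac{n^{2}-9n+42}{2}=\frac{n(n-9)}{2}+21,
\]
so the assumption $e(G)\geq n(n-9)/2+21$ is precisely the bound $e(G^{*})\geq\binom{n-k-1}{2}+\binom{k+2}{2}+1$ with $k=3$. As $n\geq 6=2k$, Lemma \ref{LeeG} yields $\omega(G^{*})\geq n-3$. Now $G^{*}$ is a connected closed claw-free non-traceable graph with $\omega(G^{*})\geq n-3$, so Lemma \ref{LeENn} forces $G^{*}=EN_{n}$. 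Since $G$ is a spanning subgraph of its closure, we obtain $G\subseteq cl(G)=EN_{n}$, which is the claimed exception.

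The individual steps are short, so there is no deep obstacle here; the difficulty is in assembling the right pieces. The point needing care is the closure reduction, where both preservation properties must be used at once: $e(G^{*})\geq e(G)$ so that the edge hypothesis survives, and preservation of traceability so that the conclusion transports back to $G$. One must also pass from ``non-traceable'' to ``non-Hamiltonian'' to unlock Lemma \ref{LeeG}, while retaining ``non-traceable'' itself for Lemma \ref{LeENn}. Verifying that the stated threshold coincides with the $k=3$ case of Lemma \ref{LeeG} is the only computation, and it is routine.
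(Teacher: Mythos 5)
Your proposal is correct and follows essentially the same route as the paper: pass to the closure, use preservation of traceability and the fact that closure only adds edges, apply Lemma \ref{LeeG} with $k=3$ (noting that non-traceable implies non-Hamiltonian) to get $\omega\geq n-3$, and finish with Lemma \ref{LeENn}. Your explicit verification that the edge threshold equals $\binom{n-4}{2}+\binom{5}{2}+1$ and your remark about needing non-Hamiltonicity to invoke Lemma \ref{LeeG} are details the paper leaves implicit, but the argument is the same.
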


\begin{proof}
Suppose  $G$ is not traceable. Let $G'=cl(G)$.  By Theorem \ref{ThBFR},
$G'$ is not traceable, and hence is not Hamiltonian. Clearly, $e(G')\geq e(G)\geq n(n-9)/2+21$. Since
$G'$ is closed, we have $\omega(G)\geq n-3$ by Lemma \ref{LeG}.
 Lemma \ref{LeENn} implies $G'=EN_n$. Thus $G\subseteq EN_n$.
\end{proof}

\section{Proofs of Theorems \ref{ThStruClawFree},
\ref{ThgeneralErdos} and \ref{ThStruClaw-Trac}}
In this section, we prove Theorems \ref{ThStruClawFree},
\ref{ThgeneralErdos}, and \ref{ThStruClaw-Trac}, respectively.

\smallskip
\noindent
{\bf Proof of Theorem \ref{ThStruClawFree}.} Suppose $G$ is not Hamiltonian.
Since $$e(G)\geq e(EB_n)=\binom{n-6}{2}+12=\frac{n^2-13n}{2}+33\geq \frac{n^2-15n}{2}+57$$
when $n\geq 24$, either $G\subseteq EB_n$ or $G \subseteq EB'_n$ by
Lemma \ref{LeeGEBn}. As $e(G)\geq e(EB_n)$, we have either $G=EB_n$
or $G=EB'_n$. {\hfill$\Box$}
\smallskip

The proof of Theorem \ref{ThgeneralErdos} needs the following theorems on Hamiltonian properties of graphs.

\begin{theorem}[Ore \cite{O63}]\label{ThO63}
Let $G$ be a graph on $n$ vertices. If each pair of nonadjacent vertices has degree sum
at least $n+1$, then $G$ is Hamiltonian-connected.
\end{theorem}

\begin{theorem}[Benhocine and Wojda \cite{BW87}]\label{ThBW}
 Let $G$ be a 3-connected
graph of order $n\geq 4$. If $\max\{d(u),d(v)\}\geq(n+1)/2$ for
every pair of vertices $u,v$ with distance 2, then $G$ is
Hamiltonian-connected.
\end{theorem}

\begin{theorem}[Matthews and Sumner \cite{MS}]\label{ThMS}
Let $G$ be a 2-connected claw-free graph on $n$ vertices.
If $\delta(G)\geq \frac{n-2}{3}$, then $G$ is Hamiltonian.
\end{theorem}

\begin{theorem}[Li \cite{L95}]\label{ThLi}
If $G$ is a 2-connected claw-free graph of order $n$ with
$\delta(G)\geq n/4$, then $G$ is Hamiltonian or $G\in\mathcal{F}$,
where $\mathcal{F}$ is the set of all the graphs defined as follows:
$G$ is in $\mathcal{F}$ if it can be decomposed into three
vertex-disjoint subgraphs $G_1$, $G_2$, and $G_3$ such that
$V(G)=\bigcup_{i=1}^3V(G_i)$ and $E_G(G_i,G_j)=\{u_iu_j,v_iv_j:
u_i,v_i\in V(G_i),u_i\neq v_i,1\leq i<j\leq3\}$.
\end{theorem}

\smallskip
\noindent
{\bf Proof of Theorem \ref{ThgeneralErdos}.}
We prove Theorem \ref{ThgeneralErdos} by contradiction. Let $G$ be a
counterexample to Theorem \ref{ThgeneralErdos} with the maximum number
of edges (depending on $n$ and $k$). Clearly $G$ is closed. Suppose
that $G$ is not Hamiltonian. Notice
$$e(G)\geq{n-2k-2\choose 2}+2{k+1\choose 2}+6\geq{n-2k-3\choose 2}+{2k+4\choose 2}+1$$
when $n\geq \max\{k^2+8k+4,4k+6\}=k^2+8k+4$. By Lemma \ref{LeG}, $\omega(G)\geq n-2k-2$. In the following,
a vertex $x$ is called \emph{simple} if $N(x)$ is a clique in $G$.

\setcounter{claim}{0}
\begin{claim}
  $\alpha(G)\geq 4$.
\end{claim}

\begin{proof}
Recall that Chv\'atal-Erd\H{o}s Theorem \cite{CE}
states that every graph $G$ is Hamiltonian if $\alpha(G)\leq \kappa(G)$,
where $\alpha(G)$ and $\kappa(G)$ are the independence
number and connectivity of $G$, respectively.
If $\alpha(G)\leq 2$, then $G$ is Hamiltonian by
Chv\'atal-Erd\H{o}s Theorem \cite{CE}, a contradiction.

Suppose that $\alpha(G)=3$. By Theorem \ref{ThB98}, $G$
contains $P=P_{x_1,x_2,x_3}$ as an induced
subgraph. It is clear that $P=P_{T,T,T}$, since otherwise
$\alpha(G)\geq 4$. Let $K_i$ be the maximal clique containing
$\{a_i,b_i,c_i\}$, $i=1,2,3$, where $a_ib_ic_ia_i$'s are three
vertex-disjoint triangles in $T$ (see Figure \ref{clawfreenonHam} in Section 2).
We shall show that every vertex of $G$ is contained in
$K_1\cup K_2\cup K_3$. Indeed, if there exists a vertex, say,
$v\in V(G)\backslash (K_1\cup K_2\cup K_3)$, then
$v$ is adjacent to at most one vertex in $K_i$ for $i\in \{1,2,3\}$; otherwise,
$v$ is adjacent to every vertex in $K_i$ since $G$ is closed, and it
contradicts the maximality of the choice of $K_i$. Consider $G[\{v,c_1,c_2,c_3\}]$.
Since $c_1,c_2,c_3$ are independent and $\alpha(G)=3$, without
loss of generality, we can assume that $vc_1\in E(G)$. Next, consider
the graph $G[\{v,b_1,c_2,c_3\}]$. Notice that $b_1,c_2,c_3$ are independent
vertices. Since $vc_1\in E(G)$, $vb_1\notin E(G)$. Without loss of
generality, assume that $vc_2\in E(G)$. Finally, consider $G[\{v,b_1,a_2,c_3\}]$.
We can deduce $vc_3\in E(G)$. Now $\{v,c_1,c_2,c_3\}$ induces a claw,
a contradiction. This shows that every vertex of $G$ is contained in $K_1\cup K_2\cup K_3$.

Moreover, one can see if there is an edge in
$E':=\bigcup_{1\leq i<j\leq 3}E(K_i,K_j)\backslash E(P)$
then $G$ is Hamiltonian, a contradiction. So
$E'=\emptyset$. Recall that $\omega(G)\geq
n-2k-2$ and $\delta(G)\geq k$. We conclude that (up to symmetry)
$|K_1|=n-2k-2$, $|K_2|=|K_3|=k+1$, and $G=F_{k+1,k+1,n-2k-2}$ (see
Figure \ref{graphEpnk}).
\end{proof}

Now let $K$ be a maximum clique of $G$, and let $G'=G-V(K)$. So
$|V(G')|\leq 2k+2$. Recall that in $G$, the neighborhood of each
vertex is a clique or two disjoint cliques. Moreover, we can
easily see that $N_{G'}(x)$ is a clique for every vertex
$x\in N_K(G')$. (Note that every vertex
of $G'$ has at most one neighbor in $K$.)

Clearly $\delta(G')\geq k-1$.
If $G'$ has only one or two vertices, then $G$ is
Hamiltonian. So assume that $|V(G')|\geq 3$. For a
component $H$ of $G'$, we call a path $P$ a
\emph{perfect path} corresponding to $H$, if $P$ connects two
vertices in $K$ and its internal vertex set is $V(H)$.

We divide the left part into three cases.
\begin{case}
  $G'$ is disconnected.
\end{case}

Since $\delta(G')\geq k-1$ and $|V(G')|\leq 2k+2$, each
component of $G'$ has order at least $k$. This implies that $G'$ has
exactly two components, and each component has order at most $k+2$.
Let $H_i$, $i=1,2$, be the two components of $G'$. We now prove that
there is a perfect path $P_i$ corresponding to $H_i$ for $i=1,2$.

We have $\delta(H_i)\geq k-1\geq \frac{|V(H_i)|+1}{2}$,
unless $(k,|V(H_i)|)\in\{(3,4),(3,5),(4,6)\}$. Suppose
$(k,|V(H_i)|)\notin\{(3,4),(3,5),(4,6)\}$. By Theorem \ref{ThO63}, $H_i$
is Hamilton-connected. Since $G$ is 2-connected, there is a perfect
path $P_i$ corresponding to $H_i$. Now, we consider the leftover case
of $(k,|V(H_i)|)\in\{(3,4),(3,5),(4,6)\}$. Recall that $H_i$ is
closed, connected, and claw-free. One can check that $H_i$ is
one of the graphs in Figure \ref{figureH_i}. Moreover, every vertex
of degree $k-1$ in $H_i$ has a neighbor in $K$. It is easy to see that there is a
perfect path $P_i$ corresponding to $H_i$.

\begin{figure}[h]
\begin{center}
\begin{picture}(485,70)

\thicklines

\put(0,0){\put(10,30){\circle*{4}} \put(30,10){\circle*{4}}
\put(50,30){\circle*{4}} \put(30,50){\circle*{4}}
\put(10,30){\line(1,-1){20}} \put(10,30){\line(1,1){20}}
\put(50,30){\line(-1,-1){20}} \put(50,30){\line(-1,1){20}}}

\put(47,0){\put(10,30){\circle*{4}} \put(30,10){\circle*{4}}
\put(50,30){\circle*{4}} \put(30,50){\circle*{4}}
\put(10,30){\line(1,-1){20}} \put(10,30){\line(1,0){40}}
\put(10,30){\line(1,1){20}} \put(50,30){\line(-1,-1){20}}
\put(50,30){\line(-1,1){20}} \put(30,10){\line(0,1){40}}}

\put(100,0){\put(10,10){\circle*{4}} \put(10,50){\circle*{4}}
\put(30,10){\circle*{4}} \put(30,50){\circle*{4}}
\put(50,30){\circle*{4}} \put(10,10){\line(1,0){20}}
\put(10,10){\line(0,1){40}} \put(10,50){\line(1,0){20}}
\put(50,30){\line(-1,-1){20}} \put(50,30){\line(-1,1){20}}}

\put(147,0){\put(10,10){\circle*{4}} \put(10,50){\circle*{4}}
\put(30,10){\circle*{4}} \put(30,50){\circle*{4}}
\put(50,30){\circle*{4}} \put(10,10){\line(1,0){20}}
\put(10,10){\line(0,1){40}} \put(10,50){\line(1,0){20}}
\put(30,10){\line(0,1){40}} \put(50,30){\line(-1,-1){20}}
\put(50,30){\line(-1,1){20}}}

\put(195,0){\put(10,10){\circle*{4}} \put(10,50){\circle*{4}}
\put(30,30){\circle*{4}} \put(50,10){\circle*{4}}
\put(50,50){\circle*{4}} \put(10,10){\line(1,1){40}}
\put(10,10){\line(0,1){40}} \put(10,50){\line(1,-1){40}}
\put(50,10){\line(0,1){40}}}

\put(250,0){\put(10,10){\circle*{4}} \put(10,50){\circle*{4}}
\put(30,10){\circle*{4}} \put(30,50){\circle*{4}}
\put(50,30){\circle*{4}} \put(10,10){\line(1,0){20}}
\put(10,10){\line(2,1){40}} \put(10,10){\line(1,2){20}}
\put(10,10){\line(0,1){40}} \put(10,50){\line(2,-1){40}}
\put(10,50){\line(1,-2){20}} \put(10,50){\line(1,0){20}}
\put(30,10){\line(0,1){40}} \put(50,30){\line(-1,-1){20}}
\put(50,30){\line(-1,1){20}}}

\put(298,0){\put(10,30){\circle*{4}} \put(30,10){\circle*{4}}
\put(30,50){\circle*{4}} \put(50,10){\circle*{4}}
\put(50,50){\circle*{4}} \put(70,30){\circle*{4}}
\put(10,30){\line(1,-1){20}} \put(10,30){\line(1,0){60}}
\put(10,30){\line(1,1){20}} \put(30,10){\line(1,0){20}}
\put(30,10){\line(0,1){40}} \put(30,50){\line(1,0){20}}
\put(50,10){\line(0,1){40}} \put(70,30){\line(-1,-1){20}}
\put(70,30){\line(-1,1){20}}}

\put(365,0){\put(10,30){\circle*{4}} \put(30,10){\circle*{4}}
\put(30,50){\circle*{4}} \put(50,10){\circle*{4}}
\put(50,50){\circle*{4}} \put(70,30){\circle*{4}}
\put(10,30){\line(1,-1){20}} \put(10,30){\line(2,-1){40}}
\put(10,30){\line(1,0){60}} \put(10,30){\line(2,1){40}}
\put(10,30){\line(1,1){20}} \put(30,10){\line(1,0){20}}
\put(30,10){\line(2,1){40}} \put(30,10){\line(1,2){20}}
\put(30,10){\line(0,1){40}} \put(30,50){\line(2,-1){40}}
\put(30,50){\line(1,-2){20}} \put(30,50){\line(1,0){20}}
\put(50,10){\line(0,1){40}} \put(70,30){\line(-1,-1){20}}
\put(70,30){\line(-1,1){20}}}

\end{picture}
\caption{All possible constructions of the subgraph $H_i$}
\label{figureH_i}
\end{center}
\end{figure}
Since $P_i$ is a perfect path corresponding to $H_i$ ($i=1,2$)
where $H_1$ and $H_2$ are disjoint,
$(V(P_1)\cap V(H_1))\cap (V(P_2)\cap V(H_2))=\emptyset$.
Furthermore, as $N_{G'}(x)$ is a clique for each $x\in N_K(G')$,
the end-vertices of $P_1$ in $K$ are different
from the ones of $P_2$ in $K$. Now we can find two
disjoint paths in $G[K]$, which together with $P_1,P_2$,
form a Hamilton cycle of $G$, a contradiction.

\begin{case}
The connectivity of $G'$ is 1.
\end{case}

We claim that $G'$ has exactly two end-blocks. Suppose not. Since
$G'$ is claw-free, each cut-vertex of $G'$ is contained in exactly two blocks.
Hence, $G'$ has a block containing three cut-vertices, and $G'$
has three end-blocks that are pairwise vertex-disjoint. Since
$\delta(G')\geq k-1$, each end-block has order at least $k$, and
thus $|V(G')|\geq 3k>2k+2$, a contradiction. So $G'$ is a block-chain.

Let $B_i$ be the two end-blocks of $G'$, and let $c_i$ be the cut vertex of $G'$
contained in $B_i$, $i=1,2$ (possibly $c_1=c_2$). Note that each
end-block has order at least $k$ and at most $k+3$. Moreover, there
are at most 2 vertices in $V(G'-(V(B_1)\cup V(B_2)))$.

First consider the graph $H=G'-(V(B_1-c_1)\cup V(B_2-c_2))$. Note that
$|V(H)|\leq 4$ and $H$ is claw-free and closed. One can check that $H$
is either a path of order at most 4 between $c_1$ and $c_2$, or one of the
graphs in Figure \ref{figureH}. Thus $H$ has a Hamilton path between $c_1$ and
$c_2$.

\begin{figure}[h]
\begin{center}
\begin{picture}(290,70)

\thicklines

\put(10,0){\put(10,30){\circle*{4}} \put(50,30){\circle*{4}}
\put(30,50){\circle*{4}} \put(10,30){\line(1,0){40}}
\put(10,30){\line(1,1){20}} \put(50,30){\line(-1,1){20}}
\put(5,22){$c_1$} \put(48,22){$c_2$}}

\put(70,0){\put(10,30){\circle*{4}} \put(30,10){\circle*{4}}
\put(50,30){\circle*{4}} \put(30,50){\circle*{4}}
\put(10,30){\line(1,-1){20}} \put(10,30){\line(1,1){20}}
\put(50,30){\line(-1,1){20}} \put(30,10){\line(0,1){40}}
\put(5,22){$c_1$} \put(48,22){$c_2$}}

\put(130,0){\put(10,30){\circle*{4}} \put(30,10){\circle*{4}}
\put(50,30){\circle*{4}} \put(30,50){\circle*{4}}
\put(10,30){\line(1,1){20}} \put(50,30){\line(-1,-1){20}}
\put(50,30){\line(-1,1){20}} \put(30,10){\line(0,1){40}}
\put(5,22){$c_1$} \put(48,22){$c_2$}}

\put(190,0){\put(10,30){\circle*{4}} \put(30,10){\circle*{4}}
\put(50,30){\circle*{4}} \put(30,50){\circle*{4}}
\put(10,30){\line(1,-1){20}} \put(10,30){\line(1,0){40}}
\put(10,30){\line(1,1){20}} \put(50,30){\line(-1,-1){20}}
\put(50,30){\line(-1,1){20}} \put(30,10){\line(0,1){40}}
\put(5,22){$c_1$} \put(48,22){$c_2$}}

\end{picture}
\caption{All possible constructions of the subgraph $H$}
\label{figureH}
\end{center}
\end{figure}

Now consider $B_i$. We shall show that $B_i$ has a
Hamilton path between $c_i$ and a vertex in $N_{B_i}(K)$
as well. If $|V(B_i)|=3$ or 4, then
$B_i$ has the required property. Now suppose
$|V(B_i)|\geq 5$. Since $B_i$ is 2-connected and $c_i$
is simple in $B_i$, $B'_i=B_i-c_i$ is 2-connected.
For every vertex $v\in V(B'_i)$, we have
$$d_{B'_i}(v)\geq\left\{\begin{array}{ll}
  k-2,  & v\in N_{B'_i}(c_i);\\
  k-1,  & \mbox{otherwise}.
\end{array}\right.$$

For each pair of vertices $u,v\in V(B'_i)$
with distance two in $B'_i$, either $u\notin N_{B'_i}(c_i)$
or $v\notin N_{B'_i}(c_i)$; for otherwise $uv\in E(G)$, a
contradiction. So $$\max\{d_{B'_i}(u),d_{B'_i}(v)\}\geq k-1\geq \frac{|V(B'_i)|+1}{2},$$
unless $(k,|V(B_i)|)\in\{(3,5),(3,6),(4,7)\}$.
Suppose $(k,|V(B_i)|)\notin\{(3,5),(3,6),(4,7)\}$.
Notice $|V(B'_i)|\geq 4$. If $B'_i$ is 3-connected, then by Theorem \ref{ThBW}, $B'_i$ is
Hamiltonian-connected and $B_i$ has a Hamilton path between $c_i$ and a
vertex in $N_{B_i}(K)$. If $B'_i$ is not 3-connected, then consider
two vertices $u$ and $v$ separated by a 2-cut $\{x,y\}$. Without loss of
generality, assume $v\notin N_{B'_i}(c_i)$. In fact, if $k=3$ then
$|V(B_i)|\leq k+3=6$. Recall that $|V(B_i)|\geq 5$.
So we have $(k,|V(B_i)|)\in\{(3,5),(3,6)\}$, a contradiction.
If $k=4$ then $|V(B_i)|\leq k+3=7$, and $(k,|V(B_i)|)\in\{(4,5),(4,6)\}$.
Now suppose that either $k\geq 5$ or $(k,|V(B_i)|)\in\{(4,5),(4,6)\}$. If
$k\geq 5$, then
$d_{B'_i}(u)+d_{B'_i}(v)\geq(k-1)+(k-2)=2k-3\geq k+2\geq |V(B'_i)|$.
If $(k,|V(B_i)|)\in\{(4,5),(4,6)\}$, then
$d_{B'_i}(u)+d_{B'_i}(v)\geq(k-1)+(k-2)=2k-3=5\geq |V(B'_i)|$.
On the other hand, since $\{x,y\}$ is a 2-cut of $B'_i$ and the cut
separates $u$ and $v$, we have $d_{B'_i}(u)+d_{B'_i}(v)\leq |V(B'_i)|$.
Thus, it follows that $d_{B'_i}(u)=k-2$ and $u\in N_{B'_i}(c_i)$. Moreover,
since $u$ and $v$ are chosen arbitrarily,
$B'_i-\{x,y\}$ consists of two cliques and $x,y$ are adjacent
to all vertices in $V(B'_i)\backslash\{x,y\}$.
Finally, consider the leftover case of
$(k,|V(B_i)|)\in\{(3,5),(3,6),(4,7)\}$. Note that $B_i$ is claw-free,
closed, and $d_{B_i}(v)\geq k-1$ for $v\in V(B_i)\backslash\{c_i\}$.
Furthermore, $B_i$ is 2-connected and $N_{B_i}(c_i)$ is complete. Armed
with these properties,
one can check that $B_i$ is either complete or one of the graphs in
Figure \ref{figureB_i}. Moreover, every vertex of degree $k-1$ has a neighbor in
$K$. Thus $B_i$ has the required property.

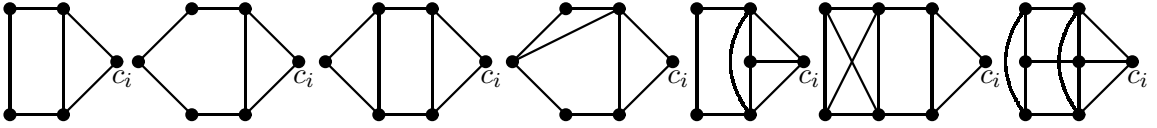
\begin{figure}[h]
\begin{center}
\begin{picture}(465,70)

\thicklines

\put(0,0){\put(10,10){\circle*{4}} \put(10,50){\circle*{4}}
\put(30,10){\circle*{4}} \put(30,50){\circle*{4}}
\put(50,30){\circle*{4}} \put(10,10){\line(1,0){20}}
\put(10,10){\line(0,1){40}} \put(10,50){\line(1,0){20}}
\put(30,10){\line(0,1){40}} \put(50,30){\line(-1,-1){20}}
\put(50,30){\line(-1,1){20}} \put(48,22){$c_i$}}

\put(48,0){\put(10,30){\circle*{4}} \put(30,10){\circle*{4}}
\put(30,50){\circle*{4}} \put(50,10){\circle*{4}}
\put(50,50){\circle*{4}} \put(70,30){\circle*{4}}
\put(10,30){\line(1,-1){20}} \put(10,30){\line(1,1){20}}
\put(30,10){\line(1,0){20}} \put(30,50){\line(1,0){20}}
\put(50,10){\line(0,1){40}} \put(70,30){\line(-1,-1){20}}
\put(70,30){\line(-1,1){20}} \put(68,22){$c_i$}}

\put(118,0){\put(10,30){\circle*{4}} \put(30,10){\circle*{4}}
\put(30,50){\circle*{4}} \put(50,10){\circle*{4}}
\put(50,50){\circle*{4}} \put(70,30){\circle*{4}}
\put(10,30){\line(1,-1){20}} \put(10,30){\line(1,1){20}}
\put(30,10){\line(1,0){20}} \put(30,10){\line(0,1){40}}
\put(30,50){\line(1,0){20}} \put(50,10){\line(0,1){40}}
\put(70,30){\line(-1,-1){20}} \put(70,30){\line(-1,1){20}}
\put(68,22){$c_i$}}

\put(188,0){\put(10,30){\circle*{4}} \put(30,10){\circle*{4}}
\put(30,50){\circle*{4}} \put(50,10){\circle*{4}}
\put(50,50){\circle*{4}} \put(70,30){\circle*{4}}
\put(10,30){\line(1,-1){20}} \put(10,30){\line(1,1){20}}
\put(10,30){\line(2,1){40}} \put(30,10){\line(1,0){20}}
\put(30,50){\line(1,0){20}} \put(50,10){\line(0,1){40}}
\put(70,30){\line(-1,-1){20}} \put(70,30){\line(-1,1){20}}
\put(68,22){$c_i$}}

\put(257,0){\put(10,10){\circle*{4}} \put(10,50){\circle*{4}}
\put(30,10){\circle*{4}} \put(30,30){\circle*{4}}
\put(30,50){\circle*{4}} \put(50,30){\circle*{4}}
\put(10,10){\line(1,0){20}} \put(10,10){\line(0,1){40}}
\put(10,50){\line(1,0){20}} \put(30,10){\line(0,1){40}}
\qbezier(30,10)(15,30)(30,50) \put(50,30){\line(-1,-1){20}}
\put(50,30){\line(-1,0){20}} \put(50,30){\line(-1,1){20}}
\put(48,22){$c_i$}}

\put(305,0){\put(10,10){\circle*{4}} \put(10,50){\circle*{4}}
\put(30,10){\circle*{4}} \put(30,50){\circle*{4}}
\put(50,10){\circle*{4}} \put(50,50){\circle*{4}}
\put(70,30){\circle*{4}} \put(10,10){\line(1,0){40}}
\put(10,10){\line(1,2){20}} \put(10,10){\line(0,1){40}}
\put(10,50){\line(1,0){40}} \put(10,50){\line(1,-2){20}}
\put(30,10){\line(0,1){40}} \put(50,10){\line(0,1){40}}
\put(70,30){\line(-1,-1){20}} \put(70,30){\line(-1,1){20}}
\put(68,22){$c_i$}}

\put(380,0){\put(10,10){\circle*{4}} \put(10,30){\circle*{4}}
\put(10,50){\circle*{4}} \put(30,10){\circle*{4}}
\put(30,30){\circle*{4}} \put(30,50){\circle*{4}}
\put(50,30){\circle*{4}} \put(10,10){\line(1,0){20}}
\put(10,10){\line(0,1){40}} \put(10,50){\line(1,0){20}}
\put(30,10){\line(0,1){40}} \qbezier(10,10)(-5,30)(10,50)
\qbezier(30,10)(15,30)(30,50) \put(50,30){\line(-1,-1){20}}
\put(50,30){\line(-1,0){40}} \put(50,30){\line(-1,1){20}}
\put(48,22){$c_i$}}

\end{picture}
\caption{All possible constructions of the subgraph $B_i$}
\label{figureB_i}
\end{center}
\end{figure}

This implies that there is a perfect path $P$ corresponding to $G'$.
Together with a Hamilton path of $G[K]$, one can find a Hamilton cycle
of $G$, a contradiction.

\begin{case}
  $G'$ is 2-connected.
\end{case}

Note that $\delta(G')\geq k-1\geq \frac{2k}{3}\geq \frac{|V(G')|-2}{3}$. By
Theorem \ref{ThMS}, $G'$ is Hamiltonian. Let $C$ be a
Hamilton cycle of $G'$ with a given orientation.

We claim $|N_K(G')|\leq k$. Suppose $|N_K(G')|\geq k+1$.
Recall that every vertex in $G'$ has at most
one neighbor in $K$, and $N_{G'}(x)$ is a clique for every vertex
$x\in N_K(G')$. If there are two successive vertices $u$ and $v$
on $C$ that have distinct neighbors in $K$, then there will be a
perfect path corresponding to $G'$ and $G$ will be Hamiltonian, a
contradiction. So $|V(G')|\geq 2|N_K(G')|$, which
implies that $|N_K(G')|=k+1$, $|V(G')|=2k+2$, and every vertex in
$N_K(G')$ has exactly one neighbor in $G'$. Let $x\in N_K(G')$ and
$N_{G'}(x)=\{v\}$. Let $v^+,v^-$, and $v^{--}$ be the successor, the predecessor,
and the second predecessor, of $v$, respectively. Then $v^{--}$ has a
neighbor $y$ in $K$. We have $v^+v^-\in E(G)$; for otherwise
$\{v,x,v^+,v^-\}$ induces a claw. Now $P=xvv^-C[v^+,v^{--}]y$ is a
perfect path corresponding to $G'$, and $G$ is Hamiltonian, a
contradiction. Thus $|N_K(G')|\leq k$.

Let $K'\subseteq K$ such that $N_K(G')\subseteq K'$ and
$|K'|+|V(G')|=3k+3$. Now consider the graph $G''=G[V(G')\cup K']$.
We claim that $\delta(G'')\geq k$. Indeed, for each $v\in V(G')$,
all its neighbors in $G$ are in $G''$, consequently, $d_{G''}(v)\geq k$.
Moreover, recall that $|V(G')|\leq 2k+2$, and this implies $|K'|\geq k+1$.
Since $K'$ is a clique in $G''$, for each vertex $v\in K'$, $d_{G''}(v)\geq d_{G''[K']}(v)\geq k$.
Thus $\delta(G'')\geq k$. Obviously, $G''$ is 2-connected and $|V(G'')|=3k+3\geq 12$
since $k\geq 3$. As $\delta(G'')\geq k=\frac{|V(G'')|-3}{3}\geq \frac{|V(G'')|}{4}$
when $|V(G'')|\geq 12$, by Theorem \ref{ThLi}, $G''\in \mathcal{F}$ or
$G''$ is Hamiltonian.

Recall that $|K'|\geq k+1$. So $K'\backslash (N_K(G'))\neq \emptyset$
and $K'$ contains at least one simple vertex of $G$, say $v$.
Suppose that $G''$ is Hamiltonian.
Let $C''$ be a Hamilton cycle of $G''$. Then the predecessor and successor
of $v$ on $C''$ are all in $K$. Thus $C''$ can
be extended to a Hamilton cycle of $G$, a contradiction. Now suppose $G''\in \mathcal{F}$.
Let $C_1,C_2,C_3$ be the three disjoint maximal cliques of $G''$
and let $v\in V(C_1)$. Then one can see that $C_1\subset K$, $K\cap C_2=\emptyset$,
and $K\cap C_3=\emptyset$. So it is obvious $\alpha(G)=3$,
a contradiction to Claim 1.
The proof is complete. \hfill $\Box$

\smallskip
\noindent
{\bf Proof of Theorem \ref{ThStruClaw-Trac}.} Suppose that $G$ is not
traceable. Since $e(G)\geq e(EN_n)=\binom{n-3}{2}+3\geq \frac{n(n-9)}{2}+21$
when $n\geq 12$, we get $G\subseteq EN_n$ by Lemma \ref{LeeGENn}.
As $e(G)\geq e(EN_n)$, we have $G=EN_n$. {\hfill$\Box$}

\section{ Spectral inequalities}
The main purpose of this section is to compare the largest
eigenvalues of the adjacency matrices (and the signless Laplacian
matrices) of a few related graphs. Before stating our results,
let us recall some results for the largest eigenvalue of the
adjacency matrix of a graph.
\begin{theorem}[Hong, Shu, and Fang \cite{HSK}]\label{ThHoShKa}
Let $G$ be a connected graph with $n$ vertices and $m$ edges. If the minimum
degree $\delta(G)\geq k$, then
$$\mu(G)\leq\frac{k-1+\sqrt{(k+1)^2+4(2m-kn)}}{2}.$$
\end{theorem}

\begin{theorem}[Hofmeister \cite{H88}]\label{ThHo}
Let $G$ be a graph. Then $$\mu(G)\geq\sqrt{\frac{\sum_{v\in
V(G)}d^2(v)}{n}}.$$
\end{theorem}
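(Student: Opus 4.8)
The plan is to read off the inequality from the Rayleigh quotient, but applied to $A^2$ rather than to $A$ itself. Write $\mathbf{1}$ for the all-ones vector in $\mathbb{R}^n$ and let $d$ denote the degree vector, whose $v$-th coordinate is $d(v)$. The starting observation is the identity $A\mathbf{1}=d$: the $v$-th entry of $A\mathbf{1}$ simply counts the neighbors of $v$. This is precisely what converts a statement about eigenvalues into one about degrees.

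Next I would invoke the variational characterization of the largest eigenvalue: for any real symmetric matrix $M$ and any nonzero vector $x$, the largest eigenvalue $\mu(M)$ satisfies $\mu(M)\geq x^{\top}Mx/(x^{\top}x)$. Taking $M=A^2$ and the test vector $x=\mathbf{1}$, and using $A\mathbf{1}=d$, gives
$$\mu(A^2)\geq\frac{\mathbf{1}^{\top}A^2\mathbf{1}}{\mathbf{1}^{\top}\mathbf{1}}=\frac{(A\mathbf{1})^{\top}(A\mathbf{1})}{n}=\frac{\|d\|^2}{n}=\frac{\sum_{v\in V(G)}d^2(v)}{n}.$$
It then remains only to relate $\mu(A^2)$ to $\mu(G)^2$ and to take a square root.

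The one point that needs care — and the only real obstacle — is the identity $\mu(A^2)=\mu(G)^2$. Since $A$ is real and symmetric, the eigenvalues of $A^2$ are exactly the squares of those of $A$, so $\mu(A^2)=\max_i\lambda_i^2=(\max_i|\lambda_i|)^2$. I would close the gap by noting that $A$ is a nonnegative matrix, so by the Perron--Frobenius theorem its spectral radius is an eigenvalue; hence the largest eigenvalue $\mu(G)$ dominates the most negative one in absolute value and $\max_i|\lambda_i|=\mu(G)$. Substituting $\mu(A^2)=\mu(G)^2$ into the displayed bound and taking square roots yields $\mu(G)\geq\sqrt{\sum_{v\in V(G)}d^2(v)/n}$, as required. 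Every step apart from this Perron--Frobenius observation is a one-line computation.
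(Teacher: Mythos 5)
Your argument is correct and complete: the Rayleigh bound $\mu(A^2)\geq \mathbf{1}^{\top}A^2\mathbf{1}/n$ together with $A\mathbf{1}=d$ and the Perron--Frobenius observation that $\mu(G)=\max_i|\lambda_i|$ for the nonnegative symmetric matrix $A$ yields exactly Hofmeister's inequality. The paper does not prove Theorem~\ref{ThHo} at all --- it is quoted from Hofmeister's 1988 article and used as a black box --- so there is no internal proof to compare against; what you have written is the standard derivation of that cited result, and every step checks out.
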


We also need the following upper bound on $q(G)$.
\begin{theorem}[Feng and Yu \cite{FY}]\label{ThFeYu}
Let $G$ be a graph on $n$ vertices and $m$ edges. Then $$q(G)\leq
\frac{2m}{n-1}+n-2.$$
\end{theorem}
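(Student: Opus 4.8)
The plan is to work entirely with the (nonnegative) unit Perron eigenvector $x=(x_1,\dots,x_n)^{T}$ of $Q=D+A$, so that $q:=q(G)=x^{T}Qx$ and $Qx=qx$. Writing $E=E(G)$ and $s=\sum_i x_i$, I would record three relations that drive the whole argument. First, the edge form $q=\sum_{ij\in E}(x_i+x_j)^2$. Second, summing the eigenvalue equation $qx_i=d_ix_i+\sum_{j\sim i}x_j$ over all $i$ gives the scalar identity $qs=2\sum_i d_ix_i$. Third, since $\sum_{i<j}(x_i+x_j)^2=(n-1)\|x\|^2+(s^2-\|x\|^2)=(n-2)+s^2$, splitting the left sum over $E$ and $E(\overline{G})$ yields the complement identity $q=(n-2)+s^2-\sum_{ij\in E(\overline{G})}(x_i+x_j)^2$.

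Next I would apply Cauchy--Schwarz in two places. Over $E$, $\sum_{ij\in E}(x_i+x_j)^2\ge \frac1{e}\big(\sum_{ij\in E}(x_i+x_j)\big)^2=\frac1{e}\big(\sum_i d_ix_i\big)^2=\frac{q^2s^2}{4e}$, which together with the edge form gives the clean bound $qs^2\le 4e$, i.e. $s^2\le 4e/q$. Over $E(\overline{G})$, writing $\bar d_i=n-1-d_i$ and using $qs=2\sum_i d_ix_i$, one gets $\sum_{ij\in E(\overline{G})}(x_i+x_j)=\sum_i\bar d_ix_i=s\big(n-1-\tfrac q2\big)$, so Cauchy--Schwarz gives $\sum_{ij\in E(\overline{G})}(x_i+x_j)^2\ge \frac{s^2(n-1-q/2)^2}{\overline{e}}$, where $\overline{e}=e(\overline{G})$. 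I would dispose of $G=K_n$ (where $\overline{e}=0$) by the direct computation $q(K_n)=2n-2=\frac{2e}{n-1}+n-2$, and the cases $e=0$ or $q\le n-2$ are immediate; so assume $G\ne K_n$ and $q>n-2$, which by the standard bound $q<2(n-1)$ for $G\neq K_n$ forces $n-1-q/2>0$.

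Substituting the second Cauchy--Schwarz bound into the complement identity gives $q\le (n-2)+s^2\big(1-\frac{(n-1-q/2)^2}{\overline{e}}\big)$. If the bracket is $\le 0$ this already forces $q\le n-2$; otherwise I rearrange to $s^2\ge \frac{q-(n-2)}{1-(n-1-q/2)^2/\overline{e}}$, feed in $s^2\le 4e/q$ to eliminate $s$, clear denominators, and set $\beta=q-(n-2)$ (so $n-1-q/2=(n-\beta)/2$). This reduces the whole problem to a single quadratic inequality $f(\beta)=\tfrac{n(n-1)}{2}\beta^2+[\,\overline{e}(n-2)-2en\,]\beta+en^2-4e\overline{e}\le 0$. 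The decisive computation is that $\beta=\frac{2e}{n-1}$ is identically a root of $f$, the other root being $\beta=n-\frac{4\overline{e}}{n}$; a short manipulation using $e\le\binom n2$ shows the former is the larger root, so $f(\beta)\le 0$ forces $\beta\le\frac{2e}{n-1}$, i.e. $q\le\frac{2e}{n-1}+n-2$.

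I expect the main obstacle to be this last step. One must verify (i) that $\frac{2e}{n-1}$ really is a root of $f$ --- this is exactly where the coefficient $n-1$ in the theorem gets pinned down --- and (ii) that it dominates the competing root $n-\frac{4\overline{e}}{n}$. The two Cauchy--Schwarz steps are each individually lossy, so the genuinely nontrivial point is that they are \emph{simultaneously} tight precisely at $G=K_n$ and $G=K_{1,n-1}$; tracking these equality cases is what guarantees the combined estimate is no weaker than the claimed bound.
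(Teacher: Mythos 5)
Your proof is correct, but there is no proof in the paper to compare it against: Theorem~\ref{ThFeYu} is stated as a quoted result of Feng and Yu \cite{FengYu}, and the paper uses it purely as a black box (in the proofs of Theorems~\ref{ThTraceable} and~\ref{ThqG}); any comparison must therefore be with the original published argument, not with this paper. Checking your argument in detail: the three identities (the edge form $q=\sum_{ij\in E}(x_i+x_j)^2$, the scalar identity $qs=2\sum_i d_ix_i$, and the splitting of $\sum_{i<j}(x_i+x_j)^2=(n-2)+s^2$ over $E$ and $E(\overline{G})$) are all valid for a unit Perron vector, as are the two Cauchy--Schwarz estimates $qs^2\le 4e$ and $\sum_{ij\in E(\overline{G})}(x_i+x_j)^2\ge s^2(n-1-q/2)^2/\overline{e}$. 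The decisive algebra also checks out: writing $\beta=q-(n-2)$, one gets $f(\beta)=\tfrac{n(n-1)}{2}\beta^2+[\overline{e}(n-2)-2en]\beta+en^2-4e\overline{e}\le 0$, and substituting $\beta=\tfrac{2e}{n-1}$ gives $f\bigl(\tfrac{2e}{n-1}\bigr)=\tfrac{en}{n-1}\bigl(n(n-1)-2e-2\overline{e}\bigr)=0$ since $e+\overline{e}=\binom{n}{2}$; the product of roots then identifies the second root as $n-\tfrac{4\overline{e}}{n}$, and the root comparison reduces, exactly as you say, to $e\le\binom{n}{2}$ (one can sanity-check on $K_{1,3}$: the roots are $2$ and $1$, with equality $q=4$ attained at the larger root). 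Two minor remarks: first, your appeal to the ``standard bound'' $q<2(n-1)$ for $G\ne K_n$ is superfluous, since $n-1-q/2$ enters the argument only through its square, so its sign never matters --- the case split on the sign of the bracket $1-(n-1-q/2)^2/\overline{e}$ already handles everything; second, the closing paragraph about the two Cauchy--Schwarz steps being simultaneously tight at $K_n$ and $K_{1,n-1}$ is heuristic motivation rather than a required step, as the quadratic computation is by itself a complete proof. So what you have is a self-contained variational proof of a result that this paper, by design, only imports.
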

Proofs of Theorems \ref{ThmuG}, \ref{ThmuCG},
\ref{ThqG}, and \ref{ThTraceable}
rely heavily on the following lemma.
\setcounter{lemma}{3}
\begin{lemma}\label{LeCompare}
For $n\geq 10$, we have \\
(1) $\mu(EB_n)>\mu(EB'_n)>\mu(K_{n-6})=n-7$;\\
(2) $q(EB_n)>q(EB'_n)>q(K_{n-6})=2n-14$;\\
(3)
$\mu(\overline{EB'_n})<\mu(\overline{EB_n})<\mu(K_6\vee(n-6)K_1)=(5+\sqrt{24n-119})/2.$
\end{lemma}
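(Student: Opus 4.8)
The plan is to split the six strict inequalities into two groups: those that follow from monotonicity of the spectral radius under taking subgraphs, and the three genuine graph-versus-graph comparisons ($\mu(EP_n)>\mu(EP'_n)$, $q(EP_n)>q(EP'_n)$, and $\mu(\overline{EP'_n})<\mu(\overline{EP_n})$), which are the real content. Throughout I would rely on two standard facts. First, the Perron--Frobenius theorem: if $H$ is a proper subgraph of a \emph{connected} graph $G$ on the same vertex set, then $\mu(H)<\mu(G)$, and likewise $q(H)<q(G)$. Second, the equitable-partition principle: each of $EP_n,EP'_n$ and their complements is a fixed gadget attached to a blown-up clique, so its vertex set partitions into $O(1)$ cells (the three clique vertices carrying outside neighbours, the remaining $n-9$ ``interior'' clique vertices, and the six attached vertices grouped by symmetry), and this partition is equitable; since each graph is connected, its Perron eigenvector is constant on cells and $\mu$ equals the largest eigenvalue of the associated quotient matrix, whose size is bounded and whose only large entry is linear in $n$.

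For the easy inequalities I would argue as follows. Both $EP_n$ and $EP'_n$ are connected and contain $K_{n-6}$ as a proper subgraph (the blown-up triangle), so strict Perron--Frobenius monotonicity gives $\mu(EP'_n)>\mu(K_{n-6})=n-7$ and $q(EP'_n)>q(K_{n-6})=2(n-7)=2n-14$, which handles the right-hand inequalities in (1) and (2) (the values $n-7$ and $2n-14$ being the well-known spectra of a complete graph). For (3), I would identify $\overline{EP_n}$ as a proper spanning subgraph of the complete split graph $K_6\vee(n-6)K_1$: in $\overline{EP_n}$ the $n-6$ former clique vertices form an independent set, each joined to a subset of the six attached vertices, while the six attached vertices span a subgraph of $K_6$; all of these are contained in the corresponding cells of $K_6\vee(n-6)K_1$. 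Since $K_6\vee(n-6)K_1$ is connected, $\mu(\overline{EP_n})<\mu\bigl(K_6\vee(n-6)K_1\bigr)$, and the equitable partition into the two natural cells yields the quotient matrix $\bigl(\begin{smallmatrix}5 & n-6\\ 6 & 0\end{smallmatrix}\bigr)$, whose dominant eigenvalue is the larger root of $x^2-5x-6(n-6)=0$, namely $(5+\sqrt{24n-119})/2$, as claimed.

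The three remaining comparisons are the crux, because neither of $EP_n,EP'_n$ is a subgraph of the other (and similarly for their complements), so no containment argument applies. Here I would write down the bounded-size quotient matrices $B(EP_n)$ and $B(EP'_n)$ (and $B(\overline{EP_n})$, $B(\overline{EP'_n})$) explicitly; because $EP_n$ and $EP'_n$ differ only by a local rearrangement of the six attached vertices, their quotient matrices agree except in a few entries, so their characteristic polynomials $\Phi_{EP_n}(x)$ and $\Phi_{EP'_n}(x)$ differ by an explicit polynomial in $x$ and $n$. I would then reduce $\mu(EP_n)>\mu(EP'_n)$ to showing $\Phi_{EP'_n}\bigl(\mu(EP_n)\bigr)$ has the sign forcing $\mu(EP_n)$ to lie strictly above the largest root of $\Phi_{EP'_n}$, and verify this sign for every $n\ge 10$; the complement inequality is treated the same way. \emph{This polynomial sign-checking is the main obstacle}: the quotient matrices are small but the symbolic manipulation of their characteristic polynomials, and the uniform verification of the relevant sign over all $n\ge 10$ together with strictness, is exactly the step that does not admit a clean structural shortcut and is most cleanly discharged with the aid of Maple, as the statement indicates.
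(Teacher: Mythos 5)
Your proposal is correct and follows essentially the same route as the paper: the paper likewise derives bounded-size characteristic equations by exploiting that the Perron eigenvector is constant on the symmetry classes of each graph (your quotient-matrix step), and settles the three genuine comparisons by Maple-assisted sign checks of those explicit polynomials. The only cosmetic differences are that the paper sandwiches the two eigenvalues between explicit test values $s,t$ close to $n-7$ (resp.\ $2n-14$, $\sqrt{n(n-6)}$) instead of evaluating one characteristic polynomial at the other's largest root, and it obtains the outer bounds from that same analysis where you invoke strict Perron--Frobenius subgraph monotonicity directly.
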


\begin{proof}
Since proofs of Inequalities $(1)$, $(2)$, and $(3)$ are quite similar, we will give
details for the proof of  inequality $(1)$ here and sketch  others.

\noindent
{\it Proof of Inequality (1):}  Let $\mu_1$  (resp.,~$\mu_2$) be the largest eigenvalue
of the adjacency matrix of $EB_n$  (resp.,~$EB'_n$).  We will compute the characteristic
equations of their adjacency matrices directly. Let $ x$ (resp.,~$ y$) be the eigenvector
corresponding to $\mu_1$ (resp.,~$\mu_2$). We observe that $EB_n$  has only four types of
vertices with respect to their degrees, namely $d(v) \in \{2, 4, n-7, n-5\}$
for each $v \in V(EB_n)$. Furthermore, if two vertices $u$ and $v$ have the
same degree, then ${x}_u={x}_v$ by symmetry. Similar observations also
hold for the graph $EB_n'$.

Let $u$ be a vertex of degree two in $EB_n$, $v$ a vertex of degree
four in $EB_n$, $w$ a vertex of degree $n-5$ in $EB_n$, and $z$ a
vertex of degree $n-7$ in $EB_n$. If $A$ is the adjacency matrix of
$EB_n$, then we have
\[
A{x}=\mu_1 { x}.
\]
In particular, we get $(A{x})_p=\mu_1 { x}_p$ for $p \in \{u,v,w,z\}$.
Recall the observation above. We get the following system of linear equations:
\begin{align}
x_v+x_w&=\mu_1 x_u  \label{eq11}\\
x_u+2x_v+x_w&= \mu_1 x_v  \label{eq12}\\
x_u+x_v+2x_w+(n-9)x_z&=\mu_1 x_w \label{eq13}\\
3x_w+(n-10)x_z&=\mu_1 x_z \label{eq14}.
\end{align}
Viewing $x_w$ as a free variable and solving for $x_u$ and $x_v$ from \eqref{eq11} and
\eqref{eq12}, we get
\begin{align}
x_u&=\frac{\mu_1-1}{\mu_1^2-2\mu_1-1} x_w \label{eq17}\\
x_v&=\frac{\mu_1+1}{\mu_1^2-2\mu_1-1} x_w \label{eq16}.
\end{align}
Solving for $x_z$  from \eqref{eq14}, we get
\begin{equation} \label{eq18}
x_z=\frac{3}{\mu_1-n+10} x_w.
\end{equation}
Putting \eqref{eq16}, \eqref{eq17},  and \eqref{eq18} in \eqref{eq13}, we obtain
\[
\frac{\mu_1-1}{\mu_1^2-2\mu_1-1}x_w+\frac{\mu_1+1}{\mu_1^2-2\mu_1-1}x_w+2x_w+\frac{3(n-9)}{\mu_1-n+10}x_w=\mu_1x_w.
\]
The Perron-Frobenius theorem implies that all entries of $x$ are positive. So we
can cancel $x_w$ from both sides of the above equation. Simplify the resulting
equation. We get the characteristic equation
\begin{equation} \label{poly11}
(\mu_1-n+10)(\mu_1^3-4\mu_1^2+\mu_1+2)-(3n-27)(\mu_1^2-2\mu_1-1)=0.
\end{equation}
For $EB_n'$, using the same idea, we get a new system of linear equations:
\begin{align}
2y_v&=\mu_2 y_u \label{eq21}\\
y_w+2y_v+y_u&=\mu_2 y_v \label{eq22} \\
2y_v+y_w+(n-8)y_z&=\mu_2 y_w \label{eq23}\\
2y_w+(n-9)y_z&=\mu_2y_z \label{eq24}.
\end{align}
From \eqref{eq21}, \eqref{eq22}, and \eqref{eq23}, we get
\begin{align}
y_v&=\frac{\mu_2}{2} y_u  \nonumber\\
y_w&=(\tfrac{\mu_2^2}{2}-\mu_2-1) y_u \label{eq25} \\
y_z&=\frac{(\mu_2^2-2\mu_2-2)(\mu_2-1)-2\mu_2}{2n-16} y_u \label{eq26}.
\end{align}
\eqref{eq25}, together with \eqref{eq24}, tells us
\begin{equation} \label{eq27}
y_z=\frac{\mu_2^2-2\mu_2-2}{\mu_2-n+9} y_u.
\end{equation}
Recall $y_u >0$. Equalizing \eqref{eq26} and \eqref{eq27} followed by cancelling $y_u$,  we get
\begin{equation} \label{poly12}
(\mu_2-n+9)\left( (\mu_2^2-2\mu_2-2)(\mu_2-1) -2\mu_2\right)-(2n-16)(\mu_2^2-2\mu_2-2)=0.
\end{equation}
Define functions
\begin{align}
f(x)&=(x-n+10)(x^3-4x^2+x+2)-(3n-27)(x^2-2x-1),\\
g(x)&=(x-n+9)\left( (x^2-2x-2)(x-1) -2x\right)-(2n-16)(x^2-2x-2).
\end{align}
Let $s=n-7+\tfrac{4}{(n-7)^2}$ and $t=n-7+\tfrac{7}{(n-7)^2}$. Tedious calculus
together with the Maple program can confirm $f(t)<0$ and $g(s)<0<g(t) $ for all $n\geq 12$.
Since $K_{n-6}$ is a subgraph of $EB_n$ and $EB_n'$, we get $\mu_1,\mu_2 \geq n-7$.
By Theorem \ref{ThHoShKa}, we get $\mu_1, \mu_2 < n-5$.  By considering
the largest eigenvalue of $A(EB_n)-\mu_1 I_n$ and $A(EB'_n)-\mu_2 I_n$, we
get the gap between the largest eigenvalue and the second largest eigenvalue of the adjacency
matrix of both $EB_n$ and $EB_n'$ is at least 2. Therefore, for each $n \geq 12$, we have
\[
\mu_1 > t \text{ and } s < \mu_2 < t,
\]
which proves Inequality (1)  for $n \geq 12$.  For $n=10, 11$, we can check Inequality (1)
directly by using Maple program.

\noindent
{\it Proof of Inequality (2):} Let $f(x)$ (resp., $g(x)$) be the characteristic
polynomial of the signless Laplace matrix of $EB_n$ (resp., $EB_n'$).
Using the same idea as proving Inequality (1) (details will be given
in Appendix A), we get
\begin{align*}
f(x)&=(x-2n+17)\left((x-n+3)(x^2-8x+11)-(2x-6)\right)-(3n-27)(x^2-8x+11)\\
g(x)&=(x^2-8x+12)\left((x-2n+16)(x-n+4)- (2n-16)  \right)\\
       &-(2x-4n+32)(2x-n+2)+4n-32.
\end{align*}
We choose $s=2(n-7)+\tfrac{4}{n}$ and $t=2(n-7)+\tfrac{6}{n-7}$. Using
basic calculus (under the help of the Maple program), we can verify $f(t)<0$ and
$g(s)<0<g(t)$ for all $n \geq 27$, which proves the inequality (2) for
$n\geq 27$. For $10 \leq n \leq 26$, we can confirm the inequality using
the Maple program directly.

\noindent
{\it Proof of Inequality (3):}  We use $f(x)$ (resp., $g(x)$) to denote
the characteristic polynomial of the adjacency matrix of $\overline{EB_n}$
(resp., $\overline{EB'_n}$). We can obtain the formula for $f(x)$ and $g(x)$
by the same argument as we did for proving Inequality (1) (the details will
be presented in Appendix B). We get the following
\begin{align*}
f(x)&=x^3-(2x^2+12x+8)-(6n-54)(x+1)\\
g(x)&=(x^2-4n+32)(x^2-2)-(2x^2+x)(x+2)-(2n-16)(x^2+x+2).
\end{align*}
Let $s=\sqrt{6(n-6)}$ and $t=\sqrt{6(n-6)}+1.3$. With the assistance of
the Maple program, we can show $f(t)<0$ and $g(s)<0<g(t)$ for all $n\geq 55$ using
calculus. We have proved the inequality (3) for $n\geq 55$. For the case of $10\leq n\leq 54$,
we can verify Inequality (3) using the Maple program straightforwardly.
\end{proof}

\section{Proofs of  Theorems \ref{ThmuG}, \ref{ThmuCG}, \ref{ThqG},
and \ref{ThTraceable}}
In this section, we prove Theorems \ref{ThmuG},
\ref{ThmuCG}, \ref{ThqG},  and \ref{ThTraceable}, respectively.

\smallskip
\noindent
\textbf{Proof of Theorem \ref{ThmuG}.}
By Lemma \ref{LeCompare} and Theorem \ref{ThHoShKa}, we have
$$n-7\leq\mu(G)\leq\frac{1+\sqrt{9+4(2e(G)-2n)}}{2}.$$
One can get
$$e(G)\geq\frac{n(n-13)}{2}+27\geq\frac{n(n-15)}{2}+57.$$
(Here we used the assumption $n\geq 30$). By Lemma \ref{LeeGEBn},
either $G$ is Hamiltonian, or $G\subseteq
EB_n$, or $G\subseteq EB'_n$. However, if either
$G \subsetneq EB_n$ or $G\subseteq EB'_n$, then
by Lemma \ref{LeCompare}(i), $\mu(G)<\mu(EB_n)$,
which leads to a contradiction. Thus  $G$ is
Hamiltonian unless $G=EB_n$.
{\hfill$\Box$}

\smallskip
\noindent
\textbf{Proof of Theorem \ref{ThmuCG}.} The proof is inspired
by \cite{FN,Z}. Let $G'=cl(G)$. If $G'$ is
Hamiltonian, then $G$ is also Hamiltonian by Theorem
\ref{ThR}. Now we assume $G'$ is not Hamiltonian. From
Lemma \ref{Ledudv}, it follows that the degree sum of
every two nonadjacent vertices $u$ and $v$ in $G'$ is
at most $n-1$. This gives
$$d_{\overline{G'}}(u)+d_{\overline{G'}}(v)\geq 2(n-1)-(n-1)=n-1.$$
By Theorem \ref{ThHo}, we have
$$\mu(\overline{G})\geq\mu(\overline{G'})\geq\sqrt{\frac{\sum_{v\in
V(G)}d_{\overline{G'}}^2(v)}{n}}=\sqrt{\frac{\sum_{uv\in
E(\overline{G'})}(d_{\overline{G'}}(u)+d_{\overline{G'}}(v))}{n}}
\geq\sqrt{\frac{(n-1)e(\overline{G'})}{n}}.$$
 Lemma \ref{LeCompare} gives
$$\frac{5+\sqrt{24n-119}}{2}\geq\mu(\overline{G})\geq\sqrt{\frac{(n-1)e(\overline{G'})}{n}}.$$
One can get
$$e(G')={n \choose 2}-e(\overline{G'})\geq {n\choose2}-\left(\frac{5+\sqrt{24n-119}}{2}\right)^2\cdot\frac{n}{n-1}>\frac{n(n-15)}{2}+56,$$
where the condition ``$n\geq 219$" is used for the last inequality. By Lemma \ref{LeeGEBn},
either $G\subseteq G'\subseteq EB_n$ or $G\subseteq G'\subseteq EB'_n$.
However, if $G\subseteq EB_n$ or $G\subsetneq EB'_n$, then
$\mu(\overline{G})>\mu(\overline{EB'_n})$, which is a contradiction.
Thus $G=EB'_n$. \hfill $\Box$

\smallskip
\noindent
\textbf{Proof of Theorem \ref{ThqG}.} From
Lemma \ref{LeCompare} and Theorem \ref{ThFeYu}, we have
$$2n-14\leq q(G)\leq\frac{2e(G)}{n-1}+n-2.$$
One can get
$$e(G)\geq\frac{(n-1)(n-12)}{2}\geq\frac{n(n-15)}{2}+57.$$
(The last inequality is true because $n\geq 51$). By Lemma \ref{LeeGEBn},
either $G$ is Hamiltonian or $G\subseteq EB_n$ or $G\subseteq EB'_n$.
However, if $G\subsetneq EB_n$ or $G\subseteq EB'_n$, then
$q(G)<q(EB_n)$, a contradiction. Thus $G=EB_n$.
{\hfill$\Box$}

\smallskip
\noindent
\textbf{Proof of Theorem \ref{ThTraceable}.} Note that $q(EN_n)\geq
q(K_{n-3})\geq 2n-8$. By Theorem \ref{ThFeYu}, we have
$$2n-8\leq q(G)\leq
\frac{2e(G)}{n-1}+n-2.$$
We get
$$e(G)\geq\frac{(n-1)(n-6)}{2}\geq\frac{n(n-9)}{2}+21,$$
where the assumption $n\geq 18$ is used for the second inequality. By Lemma \ref{LeeGENn}, either
$G$ is traceable or $G\subseteq EN_n$. If $G\subsetneq EN_n$, then $q(G)<q(EN_n)$,
which gives a contradiction. Thus $G=EN_n$.
{\hfill$\Box$}

\section{A concluding remark}
Recently, Li and Ning \cite{LN16}, and independently, F\"{u}redi,
Kostochka, and Luo \cite{FKL} obtained versions of stability theorems of Erd\H{o}s'
theorem \cite{E}, respectively. We refer the interested reader
to F\"{u}redi, Kostochka, and Luo \cite{FKL-arxiv} for more developments in this direction.
It may be interesting to find stability versions of Theorem \ref{ThgeneralErdos}.

\section*{Acknowledgments}
B.-L. Li was supported by the NSFC grant (No.\ 11601429).
B. Ning was supported by the NSFC grants (No.\ 11601379
and No.\ 11771141) and the Seed Foundation of Tianjin University (2018XRG-0025). 
X. Peng was supported by the NSFC
grant (No.\ 11601380). The authors are very grateful to two
anonymous referees for carefully reading the manuscript
and for giving valuable comments which helped improving
the presentation of the paper. The revised version was made
when the second author was visiting Professor Fengming
Dong at NTU, Singapore. The second author is very grateful
to all his help, enthusiasm, and encouragement during
his visit.

\section{Appendix}
\smallskip
\noindent
{\bf Appendix A:}  Let $q_1$  (resp.,~$q_2$) be the largest eigenvalue of
the signless Laplacian matrix of $EB_n$ (resp.,~$EB'_n$).
Let $x$ (resp.,~$y$) be the eigenvector corresponding to $q_1$ (resp.,~$q_2$).
Pick $u$ as a vertex of degree two in $EB_n$, $v$ as a vertex of degree four
in $EB_n$, $w$ as a vertex of degree $n-5$ in $EB_n$, and $z$ as a vertex of
degree $n-7$ in $EB_n$. If $Q$ is the signless Laplacian matrix of $EB_n$, then we have
$Qx=q_1x$.  By symmetry, we get the
following system of linear equations:
\begin{align}
2x_u+x_v+x_w&=q_1x_u \label{eq31}\\
6x_v+x_u+x_w&=q_1x_v \label{eq32}\\
(n-3)x_w+x_u+x_v+(n-9)x_z&=q_1x_w \label{eq33}\\
(2n-17)x_z+3x_w&=q_1x_z. \label{eq34}
\end{align}
 We view $x_w$ as a variable and solve for $x_u$, $x_v$, and $x_z$
 from \eqref{eq31}, \eqref{eq32} and \eqref{eq34}. We get
 \begin{align}
 x_u&=\frac{q_1-5}{q_1^2-8q_1+11} x_w \label{eq35}\\
 x_v&=\frac{q_1-1}{q_1^2-8q_1+11} x_w \label{eq36} \\
 x_z&=\frac{3}{q_1-2n+17}x_w. \label{eq37}
  \end{align}
Putting \eqref{eq35}, \eqref{eq36}, and \eqref{eq37} in \eqref{eq33}, we get
\[
\frac{q_1-5}{q_1^2-8q_1+11} x_w+\frac{q_1-1}{q_1^2-8q_1+11} x_w+\frac{3n-27}{q_1-2n+17}x_w=(q_1-n+3)x_w.
\]
Cancel $x_w$ from both sides of the  equation above and simplify it.
We get $q_1$ must satisfy the following equation
\[
(3n-27)(q_1^2-8q_1+11)=(q_1-2n+17)((q_1-n+3)(q_1^2-8q_1+11)-(2q_1-6)),
\]
which gives the characteristic polynomial of $Q(EB_n)$.

For $EB_n'$, we select vertices $u,v,w$ and $z$ similarly.
If $Q$ is the signless Laplacian matrix of $EB_n'$, then we
have $Qy=q_2y$. We obtain the following system of inequalities:
\begin{align}
2y_u+2y_v&=q_2y_u  \label{eq41} \\
6y_v+y_u+y_w&=q_2y_v \label{eq42} \\
(n-4)y_w+2y_v+(n-8)y_z&=q_2 y_w \label{eq43} \\
(2n-16)y_z+2y_w&=q_2y_z\label{eq44}.
\end{align}
Using  $y_u$ to express $y_v$, $y_w$, and $y_z$ in \eqref{eq41},
\eqref{eq42}, and \eqref{eq43},  we get
\begin{align}
y_v&=\frac{q_2-2}{2} y_u  \label{eq45} \\
y_w&=\left(\frac{1}{2}(q_2-2)(q_2-6) -1 \right) y_u  \label{eq46}\\
y_z&=\frac{1}{n-8} \left( (q_2-n+4) \left( \frac{1}{2}(q_2-2)(q_2-6)-1 \right)-(q_2-2) \right) y_u. \label{eq47}
\end{align}
We bring \eqref{eq45}, \eqref{eq46}, and \eqref{eq47} into \eqref{eq44}.  Writing $A=(q_2-2)(q_2-6)$, we  get
\[
\frac{(q_2-2n+16)}{n-8}((q_2-n+4)(\tfrac{1}{2}A-1)-(q_2-2))y_u=(A-2)y_u.
\]
 Cancelling $y_u$ and multiplying by $2(n-8)$, we get
 \[
(q_2-2n+16)((q_2-n+4)(A-2)-(2q_2-4))=(A-2)(2n-16).
\]
Expanding the left side of the equation above and simplifying it, we get
\[
(q_2-2n+16)(q_2-n+4)A-2(q_2-2n+16)(2q_2-n+2)=(2n-16)A-(4n-32).
\]
Thus $q_2$ must
satisfy the following equation
\[
(q_2^2-8q_2+12)\left((q_2-2n+16)(q_2-n+4)- (2n-16)  \right)-(2q_2-4n+32)(2q_2-n+2)+4n-32=0,
\]
which gives the characteristic polynomial of $E(EB_n')$.

\vspace{0.3cm}

\noindent
{\bf Appendix B:} For $\overline{EB_n}$, let $v$ be a vertex of
degree $n-5$, $u$ a vertex of degree $n-3$, $w$ a vertex
of degree $4$, and $z$ a vertex of degree $6$.  We use
$\mu_1$ to  denote the largest eigenvalue of  the adjacency
matrix of $\overline{EB_n}$. If $x$ is the eigenvector of $\mu_1$, then we
have $Ax=\mu_1x$.  Recall the symmetry between the vertices with
the same degree. We get
\begin{align}
3x_u+3x_v&=\mu_1x_z \label{eq51} \\
2x_u+2x_v&=\mu_1x_w \label{eq52} \\
2x_u+2x_w+(n-9)x_z&= \mu_1 x_v \label{eq53}\\
2x_u+2x_v+2x_w+(n-9)x_z&=\mu_1 x_u. \label{eq54}
\end{align}
\eqref{eq51}-\eqref{eq52} give
\begin{equation} \label{eq55}
x_u+x_v=\mu_1x_z-\mu_1x_w.
\end{equation}
\eqref{eq54}-\eqref{eq53} yield
\begin{equation} \label{eq56}
2x_v=\mu_1x_u-\mu_1x_v.
\end{equation}
We form a new system of linear equations using \eqref{eq52},
\eqref{eq55}, and \eqref{eq56}. We solve for $x_u$,  $x_w$,
and $x_z$ from the new system, and get
\begin{align*}
x_u&=\frac{\mu_1+2}{\mu_1} x_v \\
x_w&=\left( \frac{2(\mu_1+2)}{\mu_1^2} +\frac{2}{\mu_1}\right) x_v \\
x_z&=\left( \frac{3(\mu_1+2)}{\mu_1^2} +\frac{3}{\mu_1}\right) x_v.
\end{align*}
We put expressions of $x_u$, $x_w$, and $x_z$ in \eqref{eq53}.
Then we cancel $x_v$ from both sides of the resulting equation
and simplify it. We get
\[
\mu_1^3-(2\mu_1^2+12\mu_1+8)-(6n-54)(\mu_1+1)=0,
\]
which completes the proof.

For $\overline{EB_n'}$, we use the same assumptions as
$\overline{EB_n}$.  Let $\mu_2$ be the largest eigenvalue
of the adjacency matrix of $\overline{EB_n'}$
and $y$ be the corresponding eigenvector. We have the
following system of linear equations: \begin{align}
4y_v+2y_u&=\mu_2y_z \label{eq61}\\
2y_v+2y_u&=\mu_2y_w \label{eq62} \\
y_u+y_v+y_w+(n-8)y_z&=\mu_2y_v \label{eq63}\\
y_u+2y_v+2y_w+(n-8)y_z&=\mu_2y_u. \label{eq64}
\end{align}
\eqref{eq64}-\eqref{eq63} give
\begin{equation} \label{eq65}
y_v+y_w=\mu_2y_u-\mu_2y_v.
\end{equation}
\eqref{eq61}-\eqref{eq62} give
\begin{equation} \label{eq66}
2y_v=\mu_2y_z-\mu_2y_w.
\end{equation}
We solve for $y_u$, $y_w$, and $y_z$ from the system
consisting of Equations \eqref{eq62}, \eqref{eq65},
and \eqref{eq66}. We obtain
\begin{align*}
y_u&=\frac{\mu_2^2+\mu_2+2}{\mu_2^2-2} y_v \\
y_w&=\left( \frac{2}{\mu_2}+\frac{2(\mu_2^2+\mu_2+2)}{\mu_2(\mu_2^2-2)} \right) y_v \\
y_z&=\left( \frac{4}{\mu_2}+\frac{2(\mu_2^2+\mu_2+2)}{\mu_2(\mu_2^2-2)} \right) y_v.
\end{align*}
We bring expressions of $y_u$, $y_w$, and $y_z$ in
\eqref{eq63}, cancel $y_v$, and simplify the equation.
We get $\mu_2$ must satisfy the following equation:
\[
(\mu_2^2-4n+32)(\mu_2^2-2)-(2\mu_2^2+\mu_2)(\mu_2+2)-(2n-16)(\mu_2^2+\mu_2+2)=0,
\]
which completes the proof.
\end{document}